\newcommand{\st}{{\mbox{\scriptsize st}}}
\newcommand{\nth}{{\mbox{\scriptsize th}}}
\newcommand{\Q}{\mathbb Q}
\newcommand{\C}{\mathbb C}
\newcommand{\Z}{\mathbb Z}
\newcommand{\F}{\mathbb F}
\newcommand{\Zmod}[1]{\mathbb Z_{/#1}}
\newcommand{\Zp}{\Zmod{p}}
\DeclareMathOperator{\Aut}{Aut}
\DeclareMathOperator{\Hom}{Hom}
\DeclareMathOperator{\Res}{Res}
\DeclareMathOperator{\SL}{SL}
\DeclareMathOperator{\GL}{GL}
\DeclareMathOperator{\Qd}{Qd}
\DeclareMathOperator{\Ext}{Ext}
\newcommand{\legendre}[2]{\genfrac{(}{)}{}{}{#1}{#2}}
\newtheorem{theorem}{Theorem}[section]
\newtheorem{corollary}[theorem]{Corollary}
\newtheorem{proposition}[theorem]{Proposition}
\newtheorem{lemma}[theorem]{Lemma}
\theoremstyle{definition}
\newtheorem{definition}[theorem]{Definition}
\newtheorem{remark}[theorem]{Remark}
\begin{document}
\title{$\Zp \times \Zp$ actions on $S^n \times S^n$}

\author{J. Fowler}
\address{Department of Mathematics, The Ohio State University, Columbus, Ohio, USA 43210}
\email{fowler@math.osu.edu}

\author{C. Thatcher}
\address{Department of Mathematics and Computer Science, University of Puget Sound, Tacoma, Washington, USA 98416}
\email{cthatcher@pugetsound.edu}

\keywords{Group actions on manifolds \and Postnikov towers \and Binary quadratic forms}
\subjclass[2010]{57N65 \and 57S25}

\date{}

\maketitle

\begin{abstract}
  We determine the homotopy type of quotients of $S^n \times S^n$ by free
  actions of $\Zp \times \Zp$ where $2p>n+3$. Much like free $\Zp$ actions, they can be 
  classified via the first $p$-localized $k$-invariant, but there are restrictions on the
  possibilities, and these restrictions are sufficient to determine every possibility in
  the $n=3$ case. We use this to complete the classification of free $\Zp \times \Zp$ 
  actions on $S^3 \times S^3$, for $p>3$, by reducing the problem to the simultaneous 
  classification 
  of pairs of binary quadratic forms. Although the restrictions are not sufficient to 
  determine which $k$-invariants are realizable in general, they can sometimes be used
  to rule out free actions by groups that contain $\Zp\times\Zp$ as a normal Abelian subgroup. 
 
\end{abstract}

\section{Introduction}
The topological spherical space form problem asks: what groups can act freely on the sphere and how can these group actions be classified? Conditions for which groups can act were determined during the middle of the last century (see e.g. \cite{MR0010278} \cite{MR90056}  \cite{MR0426003}). The question of {\it{how}} free cyclic groups can act on spheres was addressed in the study of lens spaces with the classification of all free cyclic group actions being completed somewhat recently. 
This question can easily be extended to actions on products of spheres. What groups can act has been addressed in a number of papers (see e.g. \cite{MR0096235} \cite{MR0107866} \cite{MR561237} \cite{MR1745517} \cite{MR892053} \cite{MR2592957} \cite{okay2018}), while the classification of {\it{how}} the simplest of groups do act on products of spheres and what invariants distinguish them has largely been skipped. This paper focuses specifically on the {\it{how}} question.

To begin addressing how groups act, one might consider the simplest group actions. Free $\Z/p$ actions on $S^n\times S^n$, for $p>\frac{1}{2}(n+3)$, was addressed in \cite{MR2721633} - the homotopy type is determined completely by the homotopy groups and the first $k$-invariant. In this paper, we consider quotients 
of free actions of $\Zp \times \Zp$ on $S^n \times S^n$ with $n>1$ odd and $p>\frac{1}{2}(n+3)$. It turns out that the homotopy classification 
is similar to the $\Zp$ case - the classes are determined by the first $k$-invariants, but the $k$-invariants are more complicated.  A significant insight is the usefulness of localizing at a large prime -  while the homotopy groups of spheres are replete with torsion,
$\pi_i S^n$ has no $p$-torsion for $i\leq 2n$ when $p$ is reasonably large. From this we see that only a couple of nontrivial stages in the localized Postnikov tower carry all the relevant data for our study.

In this paper, we begin with a review of the cohomology of $\Zp \times \Zp$ in Section~\ref{section:cohomology} and then proceed with the classification from there. In Section~\ref{section:homotopy-equivalence}, we determine the
homotopy type in terms of a single $k$-invariant, or equivalently, in
terms of the transgression in a certain spectral sequence, which the reader might also prefer to think of as an Euler class.  The 
homotopy classification of $\Zp \times \Zp$
actions on $S^n \times S^n$ then amounts to a choice of 
parameters in $\Zp$.

In Section~\ref{section:k-invariants} we find that there are strong
restrictions on the possible $k$-invariants. In Section~\ref{section:constructions} we 
provide constructions of the possible homotopy classes based on these restrictions, and in 
Section~\ref{section:s3-times-s3} we show that this is the full homotopy classification of $\Zp\times\Zp$
actions on $S^3 \times S^3$ by reducing the classification to that of
pairs of binary quadratic forms. One of our main results is the following.
\newtheorem*{thm:summary}{Theorem \ref{thm:summary}}
\begin{thm:summary} Let $p > 3$ be prime. If $p \equiv 1 \bmod 4$, then there are four homotopy classes of quotients of $S^3 \times S^3$ by free $\Zp \times \Zp$ actions.  If $p \equiv 3 \bmod 4$, then there are two classes.
\end{thm:summary}

Finally, in Section~\ref{section:NormalAbelianSubgroup} we show that these restrictions
can be
used to rule out free actions by groups containing $\Zp\times\Zp$ as a
normal Abelian subgroup. This is consistent with the results about
$\Qd(p)$ in a recent paper by Okay--Yal{\c{c}}in \cite{okay2018}.

We note that a subsequent paper will provide the
homeomorphism classification of these quotients in the case of linear
actions.

\subsection*{Acknowledgements}
The authors thank Ian Hambleton for helpful conversations and an anonymous reviewer for useful suggestions.

\section{The cohomology of $\Zp\times \Zp$}
\label{section:cohomology}

To begin, we will need the integral cohomology of $X=(S^n\times S^n)/(\Zp\times \Zp)$. To determine this we first
need to consider the ring structure of the integral cohomology of $\Zp\times\Zp$. It is known that $H^*(\Zp;\Zp)=\F_p[a]\otimes\wedge(u)$, where $|u|=1$, $|a|=2$, and $\beta(u)=a$ with $\beta$ the Bockstein homomorphism, and  that $H^*(\Zp;\Z)=\Z[a]/(pa)$, where $|a|=2$. It follows from the K\"unneth Theorem that $H^*(\Zp\times\Zp;\Zp)\cong \F_p[a,b]\otimes\wedge(u,v)$, where $|u|=|v|=1$ and $|a|=|b|=2$, but $H^*(\Zp\times\Zp;\Z)$ requires a bit more work.

The homology and cohomology groups themselves can be determined using the K\"unneth Theorem and universal coefficients.

\begin{proposition}
The integral homology groups of $\Zp\times \Zp$ are:
\[
H_k(\Zp\times\Zp\;;\;\Z) \cong \left\{
\begin{array}{ll}
\Z & \text{for } k = 0,\\
(\Zp)^{(k+3)/2} & \text{for } k > 0 \text{ odd},\\
(\Zp)^{k/2} & \text{for } k >0 \text{ even}.
\end{array} \right.
\]
The integral cohomology groups are:
\[
H^k(\Zp\times\Zp\;;\;\Z) \cong \left\{
\begin{array}{ll}
\Z & \text{for } k = 0,\\
0 & \text{for } k=1,\\
(\Zp)^{(k-1)/2} & \text{for } k > 1 \text{ odd},\\
(\Zp)^{(k+2)/2} & \text{for } k > 1 \text{ even}.
\end{array} \right.
\]
\end{proposition}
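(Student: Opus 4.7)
The plan is to deduce both statements from the well-known homology $H_0(\Zp;\Z)=\Z$, $H_k(\Zp;\Z)=\Zp$ for $k>0$ odd, and $H_k(\Zp;\Z)=0$ for $k>0$ even, by feeding this into the K\"unneth short exact sequence for homology and then applying Universal Coefficients.

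For the homology statement, I would write the K\"unneth decomposition
\[
H_k(\Zp\times\Zp;\Z) \;\cong\; \bigoplus_{i+j=k} H_i(\Zp;\Z)\otimes H_j(\Zp;\Z) \;\oplus\; \bigoplus_{i+j=k-1}\Ext(H_i(\Zp;\Z),H_j(\Zp;\Z))
\]
(replacing $\Ext$ by $\mathrm{Tor}$ as appropriate) and count summands by parity. When $k$ is odd, the tensor summand contributes only from the two pairs $(0,k)$ and $(k,0)$, giving two copies of $\Zp$, while the Tor summand contributes $\mathrm{Tor}(\Zp,\Zp)=\Zp$ once for each of the $(k-1)/2$ pairs of positive odd integers summing to $k-1$; this yields $(\Zp)^{(k-1)/2+2}=(\Zp)^{(k+3)/2}$. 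When $k$ is even, the $(0,k)$ and $(k,0)$ tensor terms vanish, the remaining tensor summands contribute $\Zp$ for each of the $k/2$ pairs of positive odd integers summing to $k$, and the Tor summand vanishes because no pair of positive odd integers sums to the odd number $k-1$; this yields $(\Zp)^{k/2}$.

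For the cohomology statement, I would apply the Universal Coefficient Theorem
\[
H^k(\Zp\times\Zp;\Z) \;\cong\; \Hom(H_k(\Zp\times\Zp;\Z),\Z)\;\oplus\;\Ext(H_{k-1}(\Zp\times\Zp;\Z),\Z).
\]
Since $H_k(\Zp\times\Zp;\Z)$ is a torsion group for every $k>0$, the $\Hom$ term vanishes in positive degrees, and $\Ext(\Zp,\Z)=\Zp$ transports the homology ranks up by one degree. The degrees $k=0$ and $k=1$ are immediate, and for $k>1$ the group $H^k$ has the same number of $\Zp$ summands as $H_{k-1}$, which gives $(\Zp)^{(k-1)/2}$ for odd $k$ and $(\Zp)^{((k-1)+3)/2}=(\Zp)^{(k+2)/2}$ for even $k$, exactly as stated.

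There is essentially no obstacle here: the only place care is needed is in the bookkeeping of which parities of $(i,j)$ survive in each K\"unneth summand, since the homology of $\Zp$ is concentrated in odd degrees so parity constraints control everything. Once that counting is done cleanly, both formulas drop out.
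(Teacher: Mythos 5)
Your proposal is correct and follows exactly the route the paper indicates (the paper states only that the groups ``can be determined using the K\"unneth Theorem and universal coefficients'' and omits the details you supply); the parity bookkeeping and the final counts all check out. The only cosmetic slip is writing $\Ext$ where the homology K\"unneth formula requires $\mathrm{Tor}$, which you already flag yourself.
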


The ring structure can then be determined by piecing together the exact sequences in cohomology associated to the short exact sequences $0\to \Zp\to\Zp^2\to\Zp\to 0$ and $0\to\Z\to\Z\to\Zp\to 0$. We take $G=\Zp\times\Zp$ for the sake of ease in writing diagrams and throughout the paper.

\[\xymatrix{H^n(G;\Z)\ar[r]^{\rho} & H^n(G;\Zp)\ar[r]^{\tilde{\beta}}\ar[rd]_{\beta} & H^{n+1}(G;\Z)\ar[r]^p\ar[d]^{\rho} & H^{n+1}(G;\Z)\\
 & & H^{n+1}(G;\Zp) & }\]

Here $\beta$ is the Bockstein associated to the first short exact
sequence above, $\tilde{\beta}$ is the Bockstein associated to the
second one, $\rho$ is the homomorphism induced by the map $\Z\to\Zp$, and $p$ is the map induced
by multiplication by $p$. We note that the triangle commutes. This along with the ring structure of $H^*(\Zp\times \Zp;\Zp)$ allows one to find the ring structure of
$H^*(\Zp\times \Zp;\Z)$.  This ring structure is given, among other
places, in \cite{MR675422} and \cite{Roberts2013}.

\begin{theorem}\label{BGintcohlgy}
The integral cohomology ring of $\Zp\times\Zp$ is
\[
  H^*(\Zp\times\Zp;\Z)\cong \Z[a,b,c]/(pa,pb,pc,c^2) 
\]
where $|a|=|b|=2$ and $|c|=3$.
\end{theorem}

\section{Homotopy equivalence and the $k$-invariants}
\label{section:homotopy-equivalence}

Let $G=\Zp\times\Zp$ act freely on $S^n\times S^n$, and let $X$ be the
resulting quotient manifold, which may only be a TOP manifold.  A simple example of such an action is given by the first $\Zp$ acting freely on the first $S^n$ and the second $\Zp$ acting freely on the second $S^n$ in such a way that the resulting quotient manifold is the product of two lens spaces. We wish to determine when two arbitrary free actions of $\Zp\times\Zp$ result in homotopy equivalent quotients. 

For $p>3$, the fundamental group $\pi_1(X)=G$ acts trivially on the homology of the universal cover of $X$ because $\GL_2(\Z)$ has no $p$-torsion.  It follows that $X$ is nilpotent, hence $X$ has a Postnikov tower that admits principal refinements and $X$ can be $p$-localized.

\begin{definition}
A connected space $X$ \textit{$n$-simple} if $\pi_1(X)$ is Abelian and acts trivially on $\pi_i(X)$ for $1<i\leq n$.
\end{definition}

An $n$-simple space has a Postnikov tower that consists of principal fibrations through the $n$th stage. We briefly describe the construction, but more specific details can be found in \cite{May}.  The first stage is taken to be $X_1=K(\pi_1(X),1)$, with $f_1:X\to X_1$ inducing an isomorphism on $\pi_1$. $p_i:X_i\to X_{i-1}$ is constructed iteratively as the fibration induced from the path space fibration over $K(\pi_iX,i+1)$ by the map $k^{i+1}:X_{i-1}\to K(\pi_iX, i+1)$. The $k^{i+1}$ are called $k$-invariants, and are thought of as cohomology classes. There are maps $f_i:X\to X_i$, for $1\leq i\leq n$, such that $p_i\circ f_i=f_{i-1}$, and each $f_i$ induces an isomorphism on $\pi_k$ for all $k\leq i$. Additionally, $\pi_k(X_i)=0$ for $k>i$.

The bottom of the Postnikov tower for an $n$-simple space generically looks like:

\begin{displaymath}
\xymatrix{
& \ar[d] & \\
& X_{n} \ar[d]& \\
&\vdots \ar[d] & \\
 & X_{3}\ar[d]^{p_3}\ar[r]^-{k^{5}} & K(\pi_{4}(X),5) \\
 & X_2\ar[d]^{p_2}\ar[r]^-{k^{4}} & K(\pi_{3}(X),4) \\
X\ar[r]\ar[ru]^{f_2}\ar[ruu]^{f_3}\ar[ruuuu]^{f_n} & X_1= K(G,1)\ar[r]^-{k^3} & K(\pi_2(X),3)}
\end{displaymath}

\begin{lemma}
\label{lemma:simple}
Let $n\ge3$. For $p>3$, $X=(S^n\times S^n)/(\Zp\times\Zp)$ is $n$-simple.
\end{lemma}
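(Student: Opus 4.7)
The plan is to check the two requirements of $n$-simplicity in turn. Freeness of the $G$-action makes $X$ a closed manifold whose universal cover is $\widetilde X = S^n \times S^n$ (simply connected since $n \geq 2$), and covering space theory identifies $\pi_1(X)$ with $G = \Zp \times \Zp$, which is Abelian; this handles the first condition.

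For the second condition, the covering projection induces isomorphisms $\pi_i(X) \cong \pi_i(\widetilde X) \cong \pi_i(S^n) \oplus \pi_i(S^n)$ for each $i \geq 2$. When $1 < i < n$ this group vanishes, so the $\pi_1(X)$-action is trivial for vacuous reasons. The only case requiring real work is $i = n$, where $\pi_n(X) \cong \Z \oplus \Z$.

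Here the $\pi_1(X)$-action is realized geometrically by the deck transformations of $\widetilde X \to X$ acting on $\pi_n(\widetilde X)$; via the Hurewicz isomorphism $\pi_n(\widetilde X) \cong H_n(S^n \times S^n; \Z) \cong \Z \oplus \Z$, this yields a homomorphism $\rho \colon G \to \Aut(\Z \oplus \Z) \cong \GL_2(\Z)$. Every non-identity element of $G$ has order $p$, so its image satisfies $T^p = I$ and its eigenvalues are $p$-th roots of unity. Since a matrix in $\GL_2(\Z)$ has characteristic polynomial of degree $2$ over $\Q$, while a primitive $p$-th root of unity has degree $p - 1$ over $\Q$, the hypothesis $p > 3$ (equivalently $p - 1 > 2$) rules out any primitive $p$-th root of unity as an eigenvalue, leaving both eigenvalues equal to $1$. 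A finite-order matrix in $\GL_2(\Z)$ with both eigenvalues equal to $1$ is necessarily the identity (the only alternative via Jordan form has infinite order), so $\rho$ is trivial.

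The main obstacle is simply packaging the standard fact that $\GL_2(\Z)$ contains no element of order $p > 3$; the rest of the argument is formal, assembled from the universal cover, the Hurewicz theorem, and a degree count on characteristic polynomials. Note that the argument actually works for any $n \geq 2$, so the hypothesis $n \geq 3$ is not essential at this stage.
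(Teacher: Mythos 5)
Your proof is correct and follows essentially the same route as the paper: both reduce to the vanishing of $\pi_i(X)$ for $1<i<n$ plus the fact that $\GL_2(\Z)\cong\Aut(\Z^2)$ contains no element of order $p>3$. The paper simply cites that $\Aut(\Z^2)$ has only $2$- and $3$-torsion, whereas you supply the justification via the degree of the cyclotomic polynomial; this is a fleshed-out version of the same argument, not a different one.
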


\begin{proof}
Since $\pi_{i}(X)\cong\pi_i(S^n\times S^n)\cong \pi_i(S^n)\times \pi_i(S^n)$, we see that $\pi_2(X)\cong\pi_3(X)\cong\dots\cong\pi_{n-1}(X)=0$, hence there is one nontrivial homotopy group $\pi_iX$ for $1<i<n+1$:  $\pi_n(X)\cong\pi_n(S^n\times S^n) =\Z^2$. Since $\Aut(\Z^2)$ only has $2$-torsion and $3$-torsion, and $p>3$, $\pi_1$ acts trivially on $\pi_i(X)$ for $1<i\leq n$.
\end{proof}

Since $\pi_i(X)$ is trivial for $1<i<n$, $X_1\simeq X_2\simeq\dots\simeq X_{n-1}$, and the bottom of the Postnikov tower becomes

\begin{displaymath}
\xymatrix{
& \ar[d] & \\
 & X_n\ar[d]^{p_n} & \\    X\ar[r]\ar[ru]^{f_n} & X_1= K(G,1)\ar[r]^-{k^{n+1}} & K((\Z)^2,n+1)}
\end{displaymath}

As $X$ is nilpotent, the Postnikov tower above the $n$th step admits principal refinements. Specifically, following the notation in \cite{MR2884233}, there is a central $\pi_1(X)$-series, $1=G_{j,r_j}\subset\cdots\subset G_{j,0}=\pi_j(X)$ for each $j>n$, such that $A_{j,l}=G_{j,l}/G_{j,l+1}$ for $0\leq l<r_j$ is Abelian and $\pi_1(X)$ acts trivially on $A_{j,l}$. The $n+1$st stage is then a finite collection of spaces $X_{n+1,l}$ constructed from maps $k^{n+2,l}:X_{n+1,l}\to K(A_{n+1,l},n+2)$ and with $X_{n+1,0}=X_n$. Similarly, the $n+i$th stage is a finite collection of spaces $X_{n+i+1,l}$ constructed from maps $k^{n+i+1,l}:X_{n+i,l}\to K(A_{n+i,l},n+i+1)$ and with $X_{n+i,0}=X_{n+i-1,r_{n+i-1}}$.

Additionally, since $X$ is nilpotent, $X$ can be $p$-localized. This is done by inductively $p$-localizing the Postnikov tower, i.e. the $(X_j)_{(p)}$ are inductively constructed using fibrations with $K(\pi ,j)'s$, where each $\pi$ is a $\Z_{(p)}-$module (see for example Theorem 5.3.2 in \cite{MR2884233} or Sullivan's notes \cite{MR2162361}). 
Specifically, we localize the first stage, $(X_1)_{(p)}=(K(\pi_1(X),1))_{(p)}=K((\pi_1(X))_{(p)},1)=K((\Zp)^2,1)=X_1$, and localize the $n$th homotopy group, $(\pi_nX)_{(p)}=\pi_n X\otimes \Z_{(p)} =(\Z_{(p)})^2$, and then consider the following diagram:

\begin{displaymath}
\xymatrix{
K(\pi_nX,n)\ar[r]\ar[d] & X_n \ar[r]\ar@{-->}[d]^{\phi_{n+1}} & X_1\ar[r]^(.3){k^{n+1}}\ar[d]^{\phi_n} & K(\pi_nX,n+1))\ar[d]\\
K((\pi_nX)_{(p)},n)\ar[r] & (X_n)_{(p)} \ar[r] & (X_1)_{(p)}\ar[r]^(.3){(k^{n+1})_{(p)}} & K((\pi_nX)_{(p)},n+1))}
\end{displaymath}

\noindent Here $(k^{n+1})_{(p)}$ is the $p$-localization of $k^{n+1}$. The right square commutes up to homotopy and there exists a map $\phi_{n+1}$, that is localization of $X_n$ at $p$, such that the middle and left squares commute up to homotopy. Similar arguments can be made for the stages above $n$, and then we take $X_{(p)}=\lim (X_i)_{(p)}$ and $\phi=\lim \phi_i:X\to X_{(p)}$. 

We note that the unique map (up to homotopy) $\phi$ localizes the homotopy and homology groups of $X$. In particular, $\phi_*(\pi_iX)=(\pi_iX)_{(p)}$, and further, $\phi_*:[X_{(p)},Z]\to[X,Z]$ is an isomorphism for any $p$-local space $Z$. \cite{MR0478146} \cite{MR2884233}.

By \cite{MR693421} the unstable homotopy group $\pi_i(S^n)$ has no $p$-torsion for $i<2p+n-3$.  We restrict to $p>(n+3)/2$, so that $\pi_i(X)$ has no $p$-torsion for $i\leq 2n$. It follows that $A_{j,l}$ has no $p$-torsion for $n<j\leq 2n$, and since $A_{j,l}$ is finite for all $j>n$, $(A_{j,l})_{(p)}=A_{j,l}\otimes Z_{(p)}=0$, $K((A_{j,l})_{(p)},j+1)$ is a point, and $(k^{m+1})_{(p)}=0$, where $(k^{m+1})_{(p)}$ is the $p$-localized $k$-invariant associated with the $m$th stage ($X_m=X_{j,l}$). 
Since the construction of the tower becomes formal after the dimension of $X$ (i.e after $2n$), the only nontrivial $k$-invariant in the localized Postnikov tower before it becomes formal is $(k^{n+1})_{(p)}\in H^{n+1}(X_1;(\Z_{(p)})^2)\cong (\Zp)^{n+3}$. Given an identification of $\pi_1$ and $\pi_n$, this $p$-localized first $k$-invariant then determines the homotopy type of the localization.  In fact, the first nontrivial $k$-invariant characterizes $X$ up to homotopy as well.

\begin{theorem}\label{thm:he2}
  Let $X$ and $Y$ be quotients of free $\Zp\times\Zp$ actions on
  $S^n\times S^n$ with odd $n \geq 3$, where $p > 3$ satisfies $2p+n-3>2n$, and let $k_X^{n+1}$ and  $k_Y^{n+1}$ denote the first nontrivial $k$-invariant. The spaces $X$ and $Y$ are
  homotopy equivalent if and only if there are isomorphisms  $g_1 : \pi_1X \to \pi_1Y$ and $g_n : \pi_nX \to \pi_nY$ so that
  \begin{displaymath}
\xymatrix{
K(\pi_1(X) ,1)\ar[r]^-{k_X^{n+1}}\ar@{->}[d]^{{g_1}_\star} &
K(\pi_n(X),n+1)\ar@{->}[d]^{{g_n}_\star}\\
K(\pi_1(Y) ,1)\ar[r]^-{k_Y^{n+1}} &
K(\pi_n(Y),n+1)
}
\end{displaymath}
  commutes up to homotopy, i.e., $k_X^{n+1}\in H^{n+1}(\pi_1 X;\pi_n X)$ and $k_Y^{n+1}\in H^{n+1}(\pi_1 Y;\pi_n Y)$ are identified through 
the maps induced by $g_1$ and $g_n$.
\end{theorem}

Two lemmas are used in proving
Theorem~\ref{thm:he2}. Lemmas~\ref{lemma:postnikov-equivalence} and~\ref{lemma:two} are related to
Lemmas~1 and~2 in \cite{MR2721633}, respectively.

\begin{lemma}
  \label{lemma:postnikov-equivalence}
Let $X$ and $Y$ be $n$-simple spaces with identifications $\pi_1(X)\cong\pi_1(Y)\cong G$ and $\pi_n(X)\cong\pi_n(Y)\cong H$.  Further suppose $\pi_i(X)=\pi_i(Y)=0$ for $1<i<n$. If, as in the statement of Theorem~\ref{thm:he2}, the identifications on $\pi_1$ and $\pi_n$ provide an identification of the first nontrivial $k$-invariants of $X$ and $Y$ in $H^{n+1}(G;H)$, then the $n^\nth$ stages of the Postnikov towers for $X$ and $Y$ are 
homotopy equivalent, i.e. $X_n\simeq Y_n$.
\end{lemma}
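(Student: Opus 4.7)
The plan is to identify $X_n$ and $Y_n$ as homotopy pullbacks along the $k$-invariants and then use the fact that homotopic maps produce homotopy equivalent pullbacks.

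First I would use the vanishing hypothesis $\pi_i(X) = \pi_i(Y) = 0$ for $1 < i < n$ together with $n$-simplicity to collapse the bottom of the tower: the standard Postnikov construction gives $X_1 \simeq X_2 \simeq \cdots \simeq X_{n-1}$, all canonically equivalent to $K(G,1)$ via the identification of fundamental groups, and likewise for $Y$. Thus the first nontrivial stage of each tower is obtained, by definition, as the homotopy pullback
\begin{displaymath}
\xymatrix{
X_n \ar[r] \ar[d] & PK(H,n+1) \ar[d] \\
K(G,1) \ar[r]^-{k^{n+1}_X} & K(H,n+1)
}
\end{displaymath}
of the path-space fibration along the classifying map $k^{n+1}_X$, and similarly $Y_n$ is the homotopy fiber of $k^{n+1}_Y : K(G,1) \to K(H,n+1)$.

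Next I would invoke the standard identification
\[
[K(G,1), K(H,n+1)] \;\cong\; H^{n+1}(K(G,1); H) \;=\; H^{n+1}(G;H),
\]
so that the hypothesis that the two $k$-invariants determine the same cohomology class is precisely the statement that $k^{n+1}_X$ and $k^{n+1}_Y$ are homotopic as based maps $K(G,1) \to K(H,n+1)$.

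Finally, I would appeal to the basic fact that homotopy pullbacks depend only on the homotopy class of the defining maps: if $f \simeq g$, then the homotopy fibers of $f$ and $g$ are homotopy equivalent. Applying this to $f = k^{n+1}_X$ and $g = k^{n+1}_Y$ yields a homotopy equivalence $X_n \simeq Y_n$ compatible with the given identifications on $\pi_1$ and $\pi_n$. The only delicate point, and really the only thing to be careful about, is ensuring that the identifications of $\pi_1(X) \cong \pi_1(Y) \cong G$ and $\pi_n(X) \cong \pi_n(Y) \cong H$ are used consistently when comparing the $k$-invariants as elements of $H^{n+1}(G;H)$; since the $k$-invariant is itself defined only up to these choices, the hypothesis that the classes agree is already a statement made relative to these fixed identifications, so no additional obstruction arises.
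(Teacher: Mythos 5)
Your proposal is correct and follows essentially the same route as the paper: both realize $X_n$ and $Y_n$ as pullbacks of the path-space fibration along the classifying maps $k^{n+1}_X$ and $k^{n+1}_Y$, observe these maps are homotopic since they represent the same class in $H^{n+1}(G;H)$, and conclude that the resulting total spaces are homotopy equivalent. The paper merely unwinds the ``homotopy pullbacks of homotopic maps agree'' fact that you cite as a black box, explicitly constructing the comparison map from the homotopy and verifying it is an equivalence via the five lemma together with Milnor's theorem that the fibers have the homotopy type of CW complexes.
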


\begin{proof}
We have isomorphisms 
$g_1 : \pi_1X \to \pi_1Y$ and $g_n : \pi_nX \to \pi_nY$, and
$k_X^{n+1}$ and $k_Y^{n+1}$ are the first nontrivial $k$-invariants of $X$
and $Y$, respectively.  The $k$-invariant is regarded as a map \[
k_X^{n+1} : K(\pi_1(X),1) \to K(\pi_n(X),n+1).\]
The isomorphism $g_1$ induces a homotopy equivalence ${g_1}_\star : K(\pi_1(X),1) \to K(\pi_1(Y),1)$.
Similarly, the isomorphism $g_n$ induces a homotopy equivalence ${g_n}_\star : K(\pi_n(X),n+1) \to K(\pi_n(Y),n+1)$.  The identification of the first nontrivial $k$-invariant means that ${g_n}_\star \circ k_X^{n+1}$ is homotopic to $k_Y^{n+1} \circ {g_1}_\star$.

The $n^\nth$ stage $X_n$ of the Postnikov tower is constructed as the
pullback of the pathspace fibration over $K(\pi_n(X),n+1)$ and $k_X^{n+1}$:

\begin{displaymath}
\xymatrix{
X_{n}\ar[r]\ar[d] &
(K(\pi_n(X),n+1))^I\ar[d]\\
K(\pi_1(X) ,1)\ar[r]^-{k_X^{n+1}} & K(\pi_1(X),n+1)}
\end{displaymath}

\noindent A similar construction is performed for $Y_n$.   We have the following map of fibrations, and we want to define a map $f$ on
the fibers.
\begin{displaymath}
\xymatrix{
X_n\ar[r]\ar@{-->}[d]^f & K(\pi_1(X) ,1)\ar[r]^-{k_X^{n+1}}\ar@{->}[d]^{{g_1}_\star} &
K(\pi_n(X),n+1)\ar@{->}[d]^{{g_n}_\star}\\
Y_n\ar[r] & K(\pi_1(Y) ,1)\ar[r]^-{k_Y^{n+1}} &
K(\pi_n(Y),n+1)
}
\end{displaymath}

\noindent The identification of the first nontrivial $k$-invariants means the square on the right commutes up to homotopy.
Let $h:K(\pi_1(X) ,1)\times I\to K(\pi_n(Y),n+1)$ be a homotopy from
${g_n}_\star \circ k_X^{n+1}$ to $k_Y^{n+1} \circ {g_1}_\star$.  With $X_n$ defined as a pullback, a point in $X_n$ consists of a pair $(x,q)$ with $x \in K(\pi_1(X),1)$ and 
$q:I\to K(\pi_n(X),n+1)$ satisfying $q(1)=k_X^{n+1}(x)$.  Define $f : X_n \to Y_n$ by $f(x,q) = (y,r)$ with $y = {g_1}_\star (x)$ and $r : I\to K(\pi_n(Y),n+1)$ given by
\[
r(t) = \begin{cases}
{g_n}_\star q(t) & \mbox{if $t \leq 1/2$,} \\
h(x, 2t-1) & \mbox{if $t \geq 1/2$.}
\end{cases}
\]
This provides a construction for $f : X_n \to Y_n$,
and by a
theorem of Milnor's, the fibers $X_n$ and $Y_n$ have the
homotopy types of CW complexes.  Therefore we have a commuting diagram of
homotopy groups,
\begin{displaymath}
\xymatrix{
\scriptstyle{\pi_{j+1}K(\pi_1(X),1)}\ar[r]\ar[d]^{\cong} &
\scriptstyle{\pi_{j+1}K(\pi_n(X),n+1)}\ar[r]\ar[d]^{\cong} &
\scriptstyle{\pi_j X_n}\ar[r]\ar[d] &
\scriptstyle{\pi_{j}K(\pi_1(X),1)}\ar[r]\ar[d]^{\cong} &
\scriptstyle{\pi_{j}K(\pi_n(X),n+1)}\ar[d]^{\cong}\\
\scriptstyle{\pi_{j+1}K(\pi_1(Y),1)}\ar[r] &
\scriptstyle{\pi_{j+1}K(\pi_n(Y),n+1)}\ar[r] &
\scriptstyle{\pi_j Y_n}\ar[r] &
\scriptstyle{\pi_{j}K(\pi_1(Y),1)}\ar[r] &
\scriptstyle{\pi_{j}K(\pi_n(Y),n+1)}
}
\end{displaymath}
The five-lemma gives us that $\pi_jX_n\cong\pi_jY_n$ for
all $j$.  Thus we have a weak equivalence between spaces having the homotopy type of CW complexes, so we have a homotopy equivalence.
\end{proof}

\begin{lemma}\label{lemma:two}
Let $M$ and $N$ be nilpotent spaces such that $H^n(M;\Z) = 0$ and $H^n(N;\Z) = 0$ for $n>m$, for some $m>0$.  If the $m$th stage of the Postnikov
tower for $M$ is homotopy equivalent to the $m$th stage of the Postnikov tower for $N$, then $M$ is homotopy equivalent to $N$, i.e. if $M_m\simeq N_m$ then $M\simeq N$.
\end{lemma}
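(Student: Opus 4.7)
The plan is to build a map $f\colon M\to N$ by obstruction theory, using the given equivalence $g\colon M^m\xrightarrow{\sim} N^m$ as the base case for an inductive lift up the Postnikov tower of $N$, and then to verify that $f$ is a homotopy equivalence by checking that it induces an isomorphism on integral homology.

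Assuming finite type (the setting the argument implicitly requires, and one satisfied by closed manifolds in the paper's application), the hypothesis $H^n(M;\Z)=0$ for $n>m$ iterates via the universal coefficient theorem to yield $H_n(M;\Z)=0$ for $n>m$ with $H_m(M;\Z)$ torsion-free, and hence $H^n(M;A)=0$ for every abelian group $A$ and every $n>m$; the same holds for $N$. Now consider the composite $\phi\colon M\to M^m\xrightarrow{g} N^m$ and lift it inductively up the principal refinement of $N$'s Postnikov tower above stage $m$. Each refinement step is a principal $K(A,k)$-fibration classified by a $k$-invariant in $H^{k+1}$ of the preceding stage with coefficients in the abelian subquotient $A$, so the obstruction to lifting a map from $M$ lives in $H^{k+1}(M;A)$ and the indeterminacy in $H^{k}(M;A)$. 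Both vanish for $k\ge m+1$, so the lifts exist and are unique up to homotopy, assembling into a map $f\colon M\to N$ covering $\phi$.

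To finish, I would show $f$ induces an isomorphism on $H_*(-;\Z)$. A Serre spectral sequence comparison for the fibrations $M\to M^m$ and $N\to N^m$, whose homotopy fibers are $m$-connected, shows that each of these maps induces an isomorphism on $H_i$ for $i\le m$; combined with $g$, this forces $f_*$ to be an isomorphism on $H_i$ for $i\le m$, while for $i>m$ both $H_i(M;\Z)$ and $H_i(N;\Z)$ already vanish. Whitehead's theorem for nilpotent spaces then promotes $f$ to a homotopy equivalence. The main obstacle I anticipate is confirming that the obstruction theory and the Serre spectral sequence comparison interact cleanly with the principal refinement in the nilpotent (non-simple) setting, so that every relevant $k$-invariant and obstruction truly lives in the untwisted cohomology $H^*(M;A)$ where the vanishing hypothesis applies; once that bookkeeping is handled stage by stage along the central $\pi_1$-series used in the refinement, the rest should be routine.
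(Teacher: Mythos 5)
Your argument is correct and is essentially the same as the paper's: the paper gives no independent proof of this lemma, deferring instead to the obstruction-theoretic proof of Lemma 2 in the cited reference \cite{MR2721633}, which proceeds exactly as you do --- lift $M \to M^m \simeq N^m$ up the principal refinement of the Postnikov tower of $N$ using the vanishing of $H^{k}(M;A)$ for $k>m$, then conclude via a homology isomorphism and the Whitehead theorem for nilpotent spaces. Your explicit flagging of the finite-type hypothesis needed to pass from $H^{n}(-;\Z)=0$ to $H^{n}(-;A)=0$ for $n>m$ is a point the paper leaves implicit.
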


We note that this lemma is essentially Lemma 2 in \cite{MR2721633} (the difference being the change of ``$m$-dimensional'' to the cohomology requirement above), and the obstruction argument proof works exactly as written. We are now in a position to prove Theorem \ref{thm:he2}.

\begin{proof}[Proof of Theorem \ref{thm:he2}]
As has been our convention, let $G=\Zp\times\Zp$.

In one direction, we assume there is a homotopy equivalence from $X$ to $Y$.  On $\pi_1$, the homotopy equivalence provides an isomorphism which then yields a homotopy equivalence between the first stage of a Postnikov tower of $X$ and the same of $Y$.  The next nontrivial stage is stage $n$, and we have a commutative square
\begin{displaymath}
\xymatrix{
X_n\ar[r]\ar[d] & Y_n \ar[d] \\
X_1\ar[r] & Y_1
}
\end{displaymath}
The vertical maps are fibrations, and taking the cofibers of these vertical maps yields the commutative square displayed in the statement of Theorem \ref{thm:he2}.

To prove the other direction, we assume $\pi_1X$ and $\pi_1Y$ are identified with $G$ and that this gives an isomorphism $g_1:\pi_1X\to\pi_1Y$, and $\pi_nX$ and $\pi_nY$ are identified with $(\Z)^2$ and that this gives an isomorphism $g_n:\pi_nX\to\pi_nY$. These maps induce identifications of $\pi_1X_{(p)}\cong\pi_1Y_{(p)}\cong G\otimes\Z_{(p)}\cong G$ and $\pi_nX_{(p)}\cong\pi_nY_{(p)}\cong(\Z)^2\otimes\Z_{(p)}\cong(\Z_{(p)})^2$ after localizing the Postnikov systems of both $X$ and $Y$ at $p$.
We have that $(X_1)_{(p)}=K(\pi_1X_{(p)},1)\simeq (Y_1)_{(p)}= K(\pi_1Y_{(p)},1)$.
Let $k_X^{n+1}$ and $k_Y^{n+1}$ be the first nontrivial $k$-invariants of $X$ and $Y$, respectively, and take $(k_X^{n+1})_{(p)}$ and $(k_Y^{n+1})_{(p)}$ to be the $p$-localized first $k$-invariants, respectively.
Since $k_X^{n+1}$ and $k_Y^{n+1}$ are in the same homotopy class of maps in
$[K(G,1):K((\Z)^2 ,n+1)]$, $(k_X^{n+1})_{(p)}$ and $(k_Y^{n+1})_{(p)}$ will be in the
same homotopy class of maps in $[K(G,1):K((\Z_{(p)})^2,n+1)]$ by construction. Since $p>3$, $X$ and $Y$ are $n$-simple by Lemma~\ref{lemma:simple}. Given that the localization of both spaces and their homotopy groups preserves this property, we see that $X_{(p)}$ and $Y_{(p)}$ are both $n$-simple as well, and we can apply Lemma~\ref{lemma:postnikov-equivalence}. It follows that $(X_n)_{(p)}\simeq (Y_n)_{(p)}$.

Since we are assuming $2p+n-3>2n$, we have that $(X_{2n+1,0})_{(p)}\simeq (X_n)_{(p)}\simeq (Y_n)_{(p)}\simeq (Y_{2n+1,0})_{(p)}$. It follows from 
Lemma \ref{lemma:two} that $X_{(p)}\simeq Y_{(p)}$. The maps $l_1:X_{(p)}\to X_{(0)}$ and $l_2:Y_{(p)}\to Y_{(0)}$, given by
inverting $p$, give via the naturality of localization a homotopy equivalence
$\iota :X_{(0)}\stackrel{\simeq}{\to}Y_{(0)}$ and identifications of $\pi_n X_{(0)}$ and $\pi_n Y_{(0)}$ with $(\Q)^2$. The following commutes up to homotopy:
\begin{displaymath}
\xymatrix{
X_{(p)}\ar[r]^{\simeq}\ar[d]^{l_1} &
Y_{(p)}\ar[d]^{l_2}\\
X_{(0)}\ar[r]^-{\iota} & Y_{(0)}}
\end{displaymath}

On the other hand, we can consider localization away from $p$. For $X$ we have the following commutative diagram:
\[
\xymatrix{
S^n\times S^n\ar[r]\ar[d]_q & (S^n\times S^n)[\frac{1}{p}]\ar[d]^{q[\frac{1}{p}]}\\
X \ar[r] & X[\frac{1}{p}]}
\]
Since $\pi_1(X[\frac{1}{p}])=G\otimes \Z [\frac{1}{p}]=0$, we see that
$\pi_j(X[\frac{1}{p}])\cong\pi_j((S^n\times S^n)[\frac{1}{p}])$ for all $j$.
Thus $q[\frac{1}{p}]$ induces an isomorphism on every homotopy group, and is a
homotopy equivalence since $(S^n\times S^n)[\frac{1}{p}]$ and $X[\frac{1}{p}]$
both have the homotopy types of CW complexes. Similarly we have a homotopy equivalence
$(S^n\times S^n)[\frac{1}{p}]\simeq Y[\frac{1}{p}]$.

By appropriately choosing a self-map of $S^n \times S^n$ and localizing, we can produce a self-map of $(S^n \times S^n)[1/p]$ acting as desired on $\pi_n$ and then can compose to produce a map
\[
X[1/p] \simeq (S^n \times S^n)[1/p] \to (S^n \times S^n)[1/p] \simeq Y[1/p].
\]
Together these maps 
give us a homotopy equivalence $X[\frac{1}{p}]\simeq Y[\frac{1}{p}]$.  Since we have maps
$X[\frac{1}{p}]\to X_{(0)}$ and $Y[\frac{1}{p}]\to Y_{(0)}$ given by inverting
everything else, the naturality of localization gives us a map
$\iota ':X_{(0)}\stackrel{\simeq}{\to}Y_{(0)}$.  It is a homotopy
equivalence because it induces an isomorphism on all of the homotopy groups.  We have a diagram that commutes up to homotopy,
\begin{displaymath}
\xymatrix{
X[\frac{1}{p}]\ar[r]^{\simeq}\ar[d]^{L_1} &
Y[\frac{1}{p}]\ar[d]^{L_2}\\
X_{(0)}\ar[r]^-{\iota'} & Y_{(0)}}
\end{displaymath}
Since $X_{(0)}$ and $Y_{(0)}$ are $K((\Q)^2,n)$'s,
homotopy classes of maps from $X_{(0)}$ to $Y_{(0)}$ are identified with elements of $\Hom(\pi_n X \otimes \Q ,\pi_n Y \otimes \Q)$, but by construction, $\iota$ and $\iota'$ are identified by their action on $\pi_n$.

The space $X$ is the homotopy pullback of 
$X_{(p)}$ and $X[\frac{1}{p}]$ along $l_1$ and $L_1$.
For $x \in X$, write $x_1$ and $x_2$ for the image of $x$ in  $X_{(p)}$ and $X[\frac{1}{p}]$, respectively, so $l_1(x_1)$ and $L_1(x_2)$ are connected by a path in $X_{(0)}$.  To map into $Y$, a homotopy pullback, it is enough to provide maps $X \to Y_{(p)}$ and $X \to Y[\frac{1}{p}]$ which agree up to a path in $Y_{(0)}$.  Combining the localization squares for $X$ and $Y$ and all of the maps we have constructed between the squares, we have the following cube that commutes up to homotopy, thereby providing maps $X \to Y_{(p)}$ and $X \to Y[\frac{1}{p}]$ which agree up to homotopy.
\[
\xy
(0,10)*{X[\frac{1}{p}]};(20,10)*{X_{(0)}};(0,30)*{X};(20,30)*{X_{(p)}};
{\ar (0,27);(0,13)};{\ar (3,30);(15,30)};{\ar^{l_1} (20,27);(20,13)};{\ar^{L_1} (4,10);(15,10)};
(10,0)*{Y[\frac{1}{p}]};(30,0)*{Y_{(0)}};(10,20)*{Y};(30,20)*{Y_{(p)}};
{\ar (10,17);(10,3)};{\ar (13,20);(25,20)};{\ar^{l_2} (30,17);(30,3)};{\ar^{L_2} (14,0);(25,0)};
{\ar@{-->}(3,27);(7,23)};{\ar@{-->}(23,7);(27,3)};{\ar^{\simeq} (23,27);(27,23)};{\ar^{\simeq} (3,7);(7,3)};
\endxy
\]

\noindent From this we obtain maps of short exact sequences on homotopy for all $j$.

\[
\xymatrix{
0\ar[r]\ar[d]^= & \pi_jX\ar[r]\ar[d] & \pi_jX_{(p)}\oplus\pi_jX[\frac{1}{p}]\ar[r]\ar[d]^{\cong} & 
\pi_jX_{(0)}\ar[r]\ar[d]^{\cong} & 0\ar[d]^=\\
0\ar[r]& \pi_jY\ar[r] & \pi_jY_{(p)}\oplus\pi_jY[\frac{1}{p}]\ar[r] & \pi_jY_{(0)}\ar[r] & 0}
\]

\noindent The five-lemma gives isomorphisms on the homotopy groups of $X$ and
$Y$.  This then gives a homotopy equivalence from $X$ to $Y$ as they are
 both CW complexes.

\end{proof}

\section{Restrictions on the first $k$-invariant}
\label{section:k-invariants}

Throughout this section we will continue to let $G=\Zp\times \Zp$ and $X:=(S^n\times S^n)/G$, where $p > 3$ is an odd prime  and $n\ge 3$ is odd.
The first stage of the Postnikov system provides a fibration: $K(\pi_n(X),n)\overset{j}\to X_n\to X_1= K(\pi_1X,1)$.  The space $X_n$ is induced from the path-space fibration over $K(\pi_n(X),n+1)$, so the fundamental group $\pi_1(X_1)=G$ acts trivially on the homology of $K(\pi_n(X),n)$. This results in an exact sequence
\[
\cdots\to H^n(X_n;\pi_n(X))\overset{j^*}\to H^n(K(\pi_n(X),n);\pi_n(X))\overset{\tau}\to H^{n+1}(X_1;\pi_n(X)),
\]
where $\tau$ is the transgression. By \textsection 6.2 of \cite{MR1793722}, the transgression $\tau$ is also the  differential $\tau=d_{n+1}:E_{n+1}^{0,n} \to E_{n+1}^{n+1,0}$ in the Serre spectral sequence of the fibration. 
As described in Chapter III Section 3.7 of \cite{MR1434104}, the fundamental classes of the fiber $K(\pi_n(X),n)$ and the base $X_1$ correspond 
under the transgression. If $\iota\in H^n(K(\pi_n(X),n);\pi_n(X))$ is the fundamental class of the fiber, the $k$-invariant, $k^{n+1}\in H^{n+1}(X_1;\pi_n(X))$, is the pullback of the fundamental class of the base space, and $\tau(\iota)=k^{n+1}$.

On the other hand, consider the Borel fibration:

\[S^n\times  S^n \overset{i}\to (S^n\times S^n)_{hG}\to BG,\]

\noindent where $(S^n\times S^n)_{hG} :=(EG\times S^n\times S^n)/G\simeq (S^n\times S^n)/G=X$. 

There is a map of fibrations:
\[
\begin{tikzcd}
S^n\times S^n \arrow{r}{i}\arrow{d}{\phi_n}  & X \arrow{r}{f_1} \arrow{d}{f_n}  & BG \arrow{d}{=}\\
K(\pi_n(X),n) \arrow{r}{j}  & X_n \arrow{r}{p_n} & BG
\end{tikzcd}
\]
where the map $\phi_n : S^n \times S^n \to K(\pi_n(X),n)$ classifies the fundamental class in $H^n(S^n \times S^n ; \Z^2)$, and $f_n:X \to X_n$ is the $n$-equivalence in the Postnikov tower. Since $\pi_1(BG)=G$ is a finite group generated by odd order elements, it acts trivially on the cohomology of the fiber (see \cite{okay2018}), and we obtain maps between the induced exact sequences in cohomology:

\[
\begin{tikzcd}[column sep=small]
\cdots \arrow{r} & H^n(X;\pi_n(X)) \arrow{r}{i^*}  & H^n(S^n\times S^n;\pi_n(X)) \arrow{r}{\bar{\tau}} & H^{n+1}(X_1;\pi_n(X)) \\
\cdots \arrow{r} & H^n(X_n;\pi_n(X)) \arrow{r}{j^*}\arrow{u}{f_n^*}  & H^n(K(\pi_n(X),n);\pi_n(X)) \arrow{r}{\tau}\arrow{u}{\phi_n^*} & H^{n+1}(X_1;\pi_n(X)) \arrow{u}{=}
\end{tikzcd}
\]

It follows that for the fundamental class $\iota\in H^n(K(\pi_n(X),n),\pi_n(X))$, which corresponds to the identity map under the equivalence  $H^n(K(\pi_n(X),n),\pi_n(X)) \cong \Hom(\pi_n(X),\pi_n(X))$ (from the Universal Coefficient Theorem), $\bar{\tau}(\phi_n^*(\iota))=\tau(\iota)=k^{n+1}$.  Further, since $H_{n-1}(S^n\times S^n)=0$, we also have from the Universal Coefficient Theorem, $H^n(S^n\times S^n;\Z^2)\cong  H^n(S^n\times S^n;\Z)\oplus H^n(S^n\times S^n;\Z)$.

We write $(0,1)$ for the element of $\Hom(\Z^2,\Z)$ sending $(x,y)$ to $y$, and likewise write $(1,0)$ for the element of $\Hom(\Z^2,\Z)$ sending $(x,y)$ to $x$, and set $\iota=(1,0)\oplus (0,1)\in H^n(K(\pi_n(X),n);\pi_n(X))\cong \Hom(\Z^2,\Z^2)\cong \Hom(\Z^2,\Z)\oplus \Hom(\Z^2,\Z)$. Then we have that $\phi_n^*(\iota)=\phi_n^*((1,0)\oplus(0,1))=(\alpha,0)\oplus (0,\gamma)\in H^n(S^n\times S^n;\Z^2)\cong H^n(S^n\times S^n;\Z)\oplus H^n(S^n\times S^n;\Z)$. Here $\alpha$ and $\gamma$ are preferred generators for $H^n(S^n\times S^n;\Z)\cong \Z^2$. It can now be seen that $k^{n+1}=\bar\tau((\alpha,0)\oplus(0,\gamma))$.

It suffices to examine the transgression from the Serre spectral sequence with integral coefficients for the Borel fibration in order to find out information about the first nontrivial $k$-invariant, $k^{n+1}$. In particular,
for $S^n\times S^n \to X\to BG $, we have:
\[E^{p,q}_2 = H^p(BG ; H^q(S^n \times S^n;\Z)) \Rightarrow H^{p+q}(X ; \Z).\]

The first nontrivial differential is $d_{n+1}$, and the transgression
\[
  d_{n+1} :H^0(BG;H^n(S^n\times S^n;\Z)) \to H^{n+1}( BG ; H^0(S^n\times S^n;\Z) ),
\]
here satisfies $d_{n+1}(\alpha)=\bar\tau(\alpha,0)$ and $d_{n+1}(\gamma)=\bar\tau(0,\gamma)$. It follows that $k^{n+1}=d_{n+1}(\alpha)\oplus d_{n+1}(\gamma)$. The cohomology ring of $H^*(BG;\Z)$ is given in Theorem \ref{BGintcohlgy} and we use the same notation by taking the generators to be $a$, $b$, and $c$, with $|a|=|b|=2$ and $|c|=3$, and $pa=pb=pc=c^2=0$. Additionally, we take $\alpha$ and $\gamma$ to be the generators in degree $n$ of $H^*(S^n\times S^n;\Z)$, with $\alpha^2=\gamma^2=0$, as described above. We see that the $E_2\cong E_{n+1}$ page reads:

\[
\xy
(0,50)*{};(0,0)*{} **\dir{-};
(0,0)*{};(125,0)*{} **\dir{-};
(-2,3)*{0};(-2,25)*{n};(-2,47)*{2n};
(4,-4)*{0};(18,-4)*{1};(32,-4)*{2};
(46,-4)*{3};(60,-4)*{4};(74,-4)*{5};
(88,-4)*{\dots};(102,-4)*{\dots};(116,-4)*{n+1};
(4,3)*{1};(18,3)*{0};(32,3)*{a,b};(46,3)*{c};
(60,3)*{b^2};(60,7)*{a^2,ab,};(74,3)*{bc};(74,7)*{ac};(88,3)*{};(102,3)*{};
(116,3)*{\dots, b^{(n+1)/2}};(116,7)*{a^{(n+1)/2}, \dots};
(4,25)*{\alpha,\gamma};(18,25)*{0};
(32,25)*{\alpha b, \gamma b};(32,29)*{\alpha a, \gamma a};
(46,25)*{\gamma c};(46,29)*{\alpha c};
(60,25)*{gens};(60,29)*{6};(74,25)*{\gamma ac, \gamma bc};(74,29)*{\alpha ac, \alpha bc};
(116,25)*{gens};(116,28)*{n+3};
{\ar^{d_{n+1}}(6,23)*{}; (108,9)*{}};
{\ar^{d_{n+1}}(6,44)*{}; (108,30)*{}};
(4,47)*{\alpha\gamma};(18,47)*{0};
(32,47)*{\alpha\gamma a};
(32,51)*{\alpha\gamma b};
(46,47)*{\alpha\gamma c};(60,47)*{gens};(60,51)*{3};
(74,47)*{\alpha\gamma bc};(74,51)*{\alpha\gamma ac};(116,47)*{gens};(116,51)*{(n+3)/2};
\endxy
\]

\noindent where $\alpha\gamma b$ is $\alpha\gamma\otimes b$, etc., by abuse of notation. Note that the blank entries are not necessarily 0.

By virtue of its codomain being generated by suitable powers of $a$ and
$b$, the transgression $d_{n+1}:E_{n+1}^{0,n}=H^0(BG;H^n(S^n\times S^n;\Z))\to E_{n+1}^{n+1,0}=H^{n+1}(BG;H^0(S^n\times S^n;\Z))$ satisfies
\begin{align*}
    d_{n+1}(\alpha)&=\sum_{i=0}^{(n+1)/2} q_{\alpha, i}a^{\frac{n+1}{2}-i}b^i \mbox{ and } \\
    d_{n+1}(\gamma)&=\sum_{j=0}^{(n+1)/2} q_{\gamma, j}a^{\frac{n+1}{2}-j}b^j,
\end{align*}
where the $q_{\alpha, i}$ and $q_{\gamma, j}$ are elements of $\Zp$.

This spectral sequence converges to the integral cohomology of $X$, and since $X$ is a finite manifold of dimension $2n$, there are restrictions on what the coefficients $q_{\alpha, i}$ and $q_{\gamma, j}$ can be.

\begin{proposition}\label{proposition:top-bottom-nonzero}
  The coefficients $q_{\alpha, 0}$ and $q_{\gamma, 0}$ (which are
  coefficients for $a^{(n+1)/2}$) cannot both be zero. Similarly, the
  coefficients $q_{\alpha,\frac{n+1}{2}}$ and
  $q_{\gamma, \frac{n+1}{2}}$ (which are coefficients for $b^{(n+1)/2}$)
  cannot both be zero.
\end{proposition}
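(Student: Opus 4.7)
The plan is to derive a contradiction with the finiteness of $H^*(X;\Z)$ from the structure of the $E_\infty$-page of the Serre spectral sequence for the Borel fibration. Write $A := d_{n+1}(\alpha)$ and $C := d_{n+1}(\gamma)$; both lie in $H^{n+1}(BG;\Z) \subseteq \F_p[a,b]$. The two parts of the proposition are interchanged by the involution $a \leftrightarrow b$, so I will give the argument only for the $a^{(n+1)/2}$ coefficients $q_{\alpha,0}$ and $q_{\gamma,0}$.

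First I identify $E_{n+2}^{*,0}$. Since $E_{n+1}^{*,n}$ is the free $E_{n+1}^{*,0}$-module on $\alpha$ and $\gamma$ and $d_{n+1}$ is a module homomorphism, its image landing in row $q=0$ is exactly the ideal $(A,C) \subseteq H^*(BG;\Z)$, so $E_{n+2}^{*,0} = H^*(BG;\Z)/(A,C)$. Next I rule out incoming differentials into row $0$. Row $q=2n$ is the free rank-one module on $\alpha\gamma$, and the derivation property (using that $n$ is odd) gives $d_{n+1}(\alpha\gamma\,\omega) = A\gamma\,\omega - \alpha C\,\omega$. Because $\F_p[a,b]$ is an integral domain and $H^*(BG;\Z) = \F_p[a,b]\otimes\wedge(c)$ is free over it, multiplication by any nonzero element of $\F_p[a,b]$ is injective. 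Hence whenever at least one of $A, C$ is nonzero, $\ker(d_{n+1}|_{q=2n}) = 0$, forcing $E_r^{*,2n} = 0$ for all $r \ge n+2$ and in particular $d_{2n+1}$ not to reach row $0$. Therefore $E_\infty^{*,0} = H^*(BG;\Z)/(A,C)$. If instead $A = C = 0$, then $E_\infty^{*,0}$ contains all of the infinite-dimensional $H^*(BG;\Z)$, already contradicting that $X$ is a finite CW complex.

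Now suppose for contradiction that $q_{\alpha,0} = q_{\gamma,0} = 0$. Then every monomial appearing in $A$ or $C$ is divisible by $b$, so $A, C \in (b)$. Let $\phi \colon H^*(BG;\Z) \to \F_p[a]$ be the ring surjection sending $b \mapsto 0$ and $c \mapsto 0$. Since $\phi(A) = \phi(C) = 0$, the map $\phi$ descends to $E_\infty^{*,0} = H^*(BG;\Z)/(A,C)$, and $\phi(a^k) = a^k \neq 0$ for all $k \geq 0$. Hence $a^k$ represents a nonzero class in $E_\infty^{2k,0}$ for every $k$. But $E_\infty^{2k,0}$ is a subquotient of $H^{2k}(X;\Z)$, which vanishes for $2k > 2n$ since $X$ is a closed $2n$-manifold; taking any $k > n$ produces the contradiction. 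The case $q_{\alpha,(n+1)/2} = q_{\gamma,(n+1)/2} = 0$ is identical after interchanging the roles of $a$ and $b$.

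The main obstacle is the spectral-sequence bookkeeping in the second paragraph: one must verify that row $2n$ truly dies so that $E_\infty^{*,0}$ is a clean quotient of $H^*(BG;\Z)$. Once that is in hand, the rest of the proof reduces to the elementary observation that $\F_p[a,b]/(A,C)$ is infinite-dimensional whenever $A, C \in (b)$.
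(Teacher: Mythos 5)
Your argument is essentially the paper's: both observe that when $q_{\alpha,0}=q_{\gamma,0}=0$ the part of the image of $d_{n+1}$ landing in the bottom row lies in the ideal $(b)$, so the powers of $a$ survive to $E_\infty^{*,0}\subseteq H^*(X;\Z)$ and contradict the finiteness and torsion-freeness of the cohomology of the closed $2n$-manifold $X$ (the paper argues in a fixed degree, where $d_{2n+1}$ cannot reach row $0$ for bidegree reasons, rather than killing row $2n$ globally as you do). The only loose step is your degenerate branch $A=C=0$: there $d_{2n+1}$ could a priori be nonzero into row $0$, so $E_\infty^{*,0}$ need not contain all of $H^*(BG;\Z)$; the contradiction is still immediate, though, since $d_{2n+1}$ into $E^{2n,0}$ originates at $E^{-1,2n}=0$, so $E_\infty^{2n,0}=H^{2n}(BG;\Z)\neq 0$ would be a nonzero $p$-torsion subgroup of $H^{2n}(X;\Z)\cong\Z$.
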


\begin{proof}
  Since $G$ acts freely and $H^{2n}((S^n\times S^n)/G;\Z)\cong\Z$,
  only quotients of the groups generated by the $E^{p,q}_2\cong E^{p,q}_{n+1}$ terms with $p+q<2n$ or $p=0$ and   $q=2n$ can survive. Assume the transgression
  $d_{n+1}:E_{n+1}^{0,n}\to E_{n+1}^{n+1,0}$ satisfies
  $d_{n+1}(\alpha)=q_{\alpha, 1}a^{(n-1)/2}b+\dots +q_{\alpha, (n+1)/2}
  b^{(n+1)/2}$ and
  $d_{n+1}(\gamma)=q_{\gamma, 1}a^{(n-1)/2}b+\dots +q_{\gamma, (n+1)/2}
  b^{(n+1)/2}$, for some $q_{\alpha,i},q_{\gamma,j}\in\Zp$, $1\leq i,j\leq (n+1)/2$. In other words, both $q_{\alpha, 0}$ and $q_{\gamma, 0}$
  vanish.

  The $(n+1)^\st$ differential takes the generators in
  $E_{n+1}^{n-1,n}$ to combinations of the generators in
  $E_{n+1}^{2n,0}$.  By Leibniz $d_{n+1}$ sends
  $\alpha\otimes a^{(n-1)/2}$ to
  $q_{\alpha, 1}a^{n-1}b+\dots +q_{\alpha, (n+1)/2} a^{(n-1)/2}
  b^{(n+1)/2}$, and similarly for the other generators. It is not hard
  to see that the only other nontrivial differential, $d_{n+1}$, does not hit the subgroup generated by $a^{n+1}$, and there are no other differentials that map to this
  group. Therefore the generated $\Zp$ is present in $H^{2n}((S^n\times S^n)/G;\Z)$
  and other cohomology groups in higher degrees. Since
  $H^{2n}((S^n\times S^n)/G;\Z)$ is torsion free and the highest
  nontrivial degree, we get a contradiction.

  The argument for $q_{\alpha, \frac{n+1}{2}}$ and
  $q_{\gamma, \frac{n+1}{2}}$ both being nontrivial is similar.
\end{proof}

Observe that Proposition~\ref{proposition:top-bottom-nonzero} also implies that 
neither $d_{n+1}(\alpha)$ nor $d_{n+1}(\gamma)$ can map to $0$. We also see that it holds
after replacing the specified generators with their images under an
automorphism of $G$.

\begin{corollary}\label{corollary:restrictions_on_transgression}
  For nonzero $\lambda \in H^2(G;\Z)$, either $d_{n+1}(\alpha)$ or
   $d_{n+1}(\gamma)$ is nonzero in \[H^{n+1}(G;\Z) / \lambda^{(n+1)/2}.\]
\end{corollary}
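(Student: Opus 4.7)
The plan is to deduce this corollary from Proposition~\ref{proposition:top-bottom-nonzero} by a change of basis on $H^2(G;\Z)$, as foreshadowed in the remark following that proposition. Since $H^2(G;\Z) \cong \F_p^2$ and $\Aut(G) \cong \GL_2(\F_p)$ acts transitively on nonzero elements, any nonzero $\lambda$ may be extended to an $\F_p$-basis $\{\lambda, \mu\}$ of $H^2(G;\Z)$. Because $H^*(G;\Z) \cong \F_p[a,b] \otimes \wedge(c)$ and $|c|=3$ is odd, the even-degree part is the polynomial ring $\F_p[a,b] = \F_p[\lambda,\mu]$; in particular $H^{n+1}(G;\Z)$ and $H^{2n+2}(G;\Z)$ have $\F_p$-bases given by the monomials $\lambda^i \mu^j$ of total degree $(n+1)/2$ and $n+1$ respectively.

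The next step is to rerun the argument of Proposition~\ref{proposition:top-bottom-nonzero} in this new basis. Writing
\[
d_{n+1}(\alpha) = \sum_{k=0}^{(n+1)/2} \tilde q_{\alpha,k}\, \lambda^{(n+1)/2-k}\mu^k, \qquad d_{n+1}(\gamma) = \sum_{k=0}^{(n+1)/2} \tilde q_{\gamma,k}\, \lambda^{(n+1)/2-k}\mu^k,
\]
one supposes for contradiction that both $d_{n+1}(\alpha)$ and $d_{n+1}(\gamma)$ are scalar multiples of $\lambda^{(n+1)/2}$, i.e.\ $\tilde q_{\alpha,k} = \tilde q_{\gamma,k} = 0$ for all $k \geq 1$. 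Then by the Leibniz rule for the multiplicative Serre spectral sequence of the Borel fibration, the image of $d_{n+1}\colon E^{n+1,n}_{n+1} \to E^{2n+2,0}_{n+1}$ is contained in $\lambda^{(n+1)/2} \cdot H^{n+1}(G;\Z)$, which is spanned by monomials $\lambda^i \mu^{n+1-i}$ with $i \geq (n+1)/2$ and hence excludes $\mu^{n+1}$. Since $H^1(G;\Z) = 0$, the only other differential that could hit $E^{2n+2,0}$ (namely $d_{2n+1}$) has zero source, so $\mu^{n+1}$ would survive to $E^{2n+2,0}_{\infty}$, contradicting $H^{2n+2}(X;\Z)=0$ (as $X$ is a closed $2n$-manifold). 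Hence at least one of $d_{n+1}(\alpha), d_{n+1}(\gamma)$ is not a scalar multiple of $\lambda^{(n+1)/2}$, which is exactly the claim.

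There is no substantive obstacle here: the argument is the spectral-sequence bookkeeping of Proposition~\ref{proposition:top-bottom-nonzero} transported through a $\GL_2(\F_p)$ change of basis of $H^2(G;\Z)$. The only point to verify is that the basis change preserves the polynomial structure in the relevant even degrees, which is immediate from the ring presentation $\F_p[a,b]\otimes\wedge(c)$.
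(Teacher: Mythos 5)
Your argument is correct, and it is built on the same underlying idea as the paper's proof --- reduce the statement for a general nonzero $\lambda$ to the standard-basis case of Proposition~\ref{proposition:top-bottom-nonzero} via a linear change of basis of $H^2(G;\Z)$ --- but the execution is genuinely different. The paper chooses an automorphism $\varphi$ of $G$ with $\varphi_\star\lambda = a$, twists the action by $\varphi$, notes the twisted quotient is homeomorphic to the original, and invokes the Proposition for that quotient; your proof never leaves the original Serre spectral sequence. Instead you extend $\lambda$ to a basis $\{\lambda,\mu\}$, use that the even part of $H^*(G;\Z)$ is $\F_p[a,b]=\F_p[\lambda,\mu]$, and rerun the differential computation directly. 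Your endgame also differs: you work in total degree $2n+2$, where survival of $\mu^{n+1}$ contradicts $H^{2n+2}(X;\Z)=0$ for the closed $2n$-manifold $X$, whereas the paper's Proposition works in degree $2n$ and uses torsion-freeness of $H^{2n}(X;\Z)\cong\Z$; both are valid, and the checks you make (only $d_{n+1}$ and $d_{2n+1}$ can reach the bottom row, and the latter has zero source since $H^1(G;\Z)=0$) are exactly the ones needed. A side benefit of your self-contained version is that it avoids the translation step in the paper's proof, which asserts that the non-vanishing of the $a^{(n+1)/2}$-coefficients in the twisted quotient ``corresponds'' to the condition of the Corollary --- in fact it is the $b^{(n+1)/2}$-coefficient half of the Proposition that matches non-vanishing modulo $\lambda^{(n+1)/2}$ after $\lambda$ is sent to $a$, a bookkeeping point your direct argument sidesteps entirely. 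One cosmetic remark: transitivity of $\Aut(G)\cong\GL_2(\F_p)$ on nonzero classes is not needed merely to extend $\lambda$ to a basis of the two-dimensional space $H^2(G;\Z)$, though it is what justifies the paper's twisting step.
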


\begin{proof}
  Suppose $\varphi$ is an automorphism of $G$ chosen so that
  $\varphi_\star \lambda = a \in H^2(G;\Z)$.  After twisting by
  $\varphi$ the action of $G$ on $S^n \times S^n$, the resulting
  quotient is homeomorphic (albeit not equivariantly homeomorphic) to
  the original quotient space.  In particular, in that quotient the
  coefficients $q_{\alpha, 0}$ and $q_{\gamma, 0}$, namely the
  coefficients for $a^{(n+1)/2}$, cannot both be zero, which
  corresponds in the original space to the condition in the
  Corollary.
\end{proof}

\section{Constructions}
\label{section:constructions}

Now we construct examples which are more complicated than lens spaces
cross lens spaces. In this section, we take the dimension of the spheres we are acting on to be $n=2m-1$ to avoid fractions appearing in subscripts. Let $R = (r_1,\ldots,r_m,r'_1,\ldots,r'_m)$ and
$Q = (q_1,\ldots,q_m,q'_1,\ldots,q'_m)$ be elements of $(\Zp)^{2m}$ so that
$R$ and $Q$ together generate a copy of $(\Zp)^2$ inside $(\Zp)^{2m}$.
We refer to these $4m$ parameters as ``rotation numbers'' in analogy
with the case of a lens space.

Let $S^{2m-1}$ be the unit sphere in $\C^m$, so $S^{2m-1} \times
S^{2m-1}$ is a submanifold of $\C^m \times \C^m$.  Then $R$ acts on
$S^{2m-1} \times S^{2m-1}$ by
\begin{align*}
R \cdot (z,z') &= (r,r') \cdot (z,z') \\
&= (r,r') \cdot (z_1,\ldots,z_m,z'_1,\ldots,z'_m) \\
&= \left( e^{2\pi i r_1/p} z_1, \ldots, e^{2\pi i r_1/p} z_m, e^{2\pi i r'_1/p} z'_1, \ldots e^{2\pi i r'_m/p} z'_m \right),
\end{align*}
and similarly $Q$ acts on $S^{2m-1} \times S^{2m-1}$.  This provides
an action of the group $(\Zp)^{2} \cong \langle R, Q \rangle$ on
$S^{2m-1} \times S^{2m-1}$.  In analogy with the lens space case, we call such actions ``linear'' and we
write the quotient as $L(p,p; R, Q)$.  In the case of lens spaces, the $k$-invariant is the product of rotation numbers.  We now compute the first nontrivial $k$-invariant in the case of $L(p,p; R, Q)$.  We will denote this first nontrivial $k$-invariant by $k$ in what follows.

\begin{lemma}
\label{lemma:product-of-rotation}
  Let $L = L(p,p;R,Q)$ and suppose $p > m$.  Then $k(L) \in H^{2m}((\Zp)^2; \Z^2)$ is 
  \[
    \left( \prod_{i=1}^m (r_i a + q_i b), \prod_{i=1}^m (r'_i a + q'_i b) \right),
  \]
  where $a$ and $b$ are generators of $H^2((\Zp)^2; \Z)$ as described in Section~\ref{section:cohomology}.
\end{lemma}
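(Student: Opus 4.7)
The plan is to identify the Borel construction on $S^{2n-1}\times S^{2n-1}$ with a fibre product of two linear sphere bundles over $BG$, and then recognise the two components of the first $k$-invariant as the Euler classes of the associated complex $G$-representations. Since the $G$-action is a product action,
\[
(S^{2n-1}\times S^{2n-1})_{hG} \;\simeq\; V_1 \times_{BG} V_2,
\]
where each $V_j\to BG$ is the unit sphere bundle of a complex $G$-representation $W_j$. Explicitly, $W_1 = \bigoplus_{i=1}^n \chi_{r_i,q_i}$ and $W_2 = \bigoplus_{i=1}^n \chi_{r'_i,q'_i}$, where $\chi_{r,q}$ is the $1$-dimensional character of $G$ given by $R\mapsto e^{2\pi i r/p}$ and $Q\mapsto e^{2\pi i q/p}$.

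By the analysis in Section~\ref{section:k-invariants}, the first $k$-invariant is $k^{2n} = \bar\tau(\alpha,0)\oplus\bar\tau(0,\gamma)$, where $\alpha$ and $\gamma$ are the preferred generators of the top cohomology of the two sphere factors and $\bar\tau$ is the transgression in the Serre spectral sequence of the Borel fibration $S^{2n-1}\times S^{2n-1}\to L\to BG$. Because this fibration splits as a fibre product over $BG$, the transgression $\bar\tau(\alpha,0)$ is computed inside the spectral sequence of the single sphere bundle $S^{2n-1}\to V_1\to BG$. For an oriented sphere bundle associated to a complex vector bundle, the transgression of the fibrewise fundamental class is the Euler class, so $\bar\tau(\alpha,0) = e(W_1)$ and similarly $\bar\tau(0,\gamma) = e(W_2)$.

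To finish, I would compute these Euler classes. With the generators $a,b\in H^2(BG;\Z)$ fixed as in Section~\ref{section:cohomology}, one has $c_1(\chi_{r,q}) = ra+qb$, so by multiplicativity of the Euler class over direct sums,
\[
e(W_1) = \prod_{i=1}^n (r_i a + q_i b), \qquad e(W_2) = \prod_{i=1}^n (r'_i a + q'_i b),
\]
which assembles into the claimed formula for $k(L)$. The step that requires the most care is the bookkeeping of orientations and signs: one must verify that the orientations of the sphere factors producing $\alpha$ and $\gamma$ agree with the complex orientations of the $W_j$, and that the basis $\{a,b\}$ is chosen dual to the generators $\{R,Q\}$ of $G$ so that the Chern-class identification $c_1(\chi_{r,q})=ra+qb$ holds without a sign correction; the hypothesis $p>n$ is a mild size condition making the degree-$2n$ computation in $H^*(BG;\Z)$ well behaved.
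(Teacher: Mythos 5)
Your argument is correct, but it takes a genuinely different route from the paper. The paper proves the lemma by naturality of the $k$-invariant under restriction to the cyclic subgroups $\langle(\alpha,\beta)\rangle\cong\Zp$ of $(\Zp)^2$: it quotes the known formula for the $k$-invariant of $(S^{2n-1}\times S^{2n-1})/\Zp$ from the earlier $\Zp$ paper, observes that the restriction map $H^2((\Zp)^2;\Z)\to H^2(\Zp;\Z)$ sends $xa+yb$ to $(x\alpha+y\beta)\omega$, and concludes that the unknown homogeneous degree-$n$ polynomial $f(a,b)$ agrees with $\prod_i(r_i\alpha+q_i\beta)$ as a \emph{function} on $\F_p^2$; the hypothesis $p>n$ is then exactly what is needed to upgrade this equality of functions to an equality of polynomials. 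You instead compute the transgression directly: the Borel construction splits as a fibre product of the sphere bundles of $W_1$ and $W_2$ over $BG$, naturality of the Serre spectral sequence under the two projections identifies $\bar\tau(\alpha,0)$ and $\bar\tau(0,\gamma)$ with the transgressions in the individual sphere bundles, i.e.\ with $e(W_1)$ and $e(W_2)$, and multiplicativity of the Euler class finishes the computation. Your approach buys self-containedness (no appeal to the $\Zp$ result) and a conceptual identification of the two components of the $k$-invariant as Euler classes, which the paper's introduction only alludes to; notably it does not actually use $p>n$ anywhere, so your closing remark that this hypothesis makes the degree-$2n$ computation ``well behaved'' mischaracterizes its role --- in the paper's proof it is the essential interpolation hypothesis, while in yours it is simply unnecessary. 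The paper's approach buys the reverse: it inherits all orientation and normalization conventions from the already-established $\Zp$ case, whereas you must (as you acknowledge) verify by hand that the chosen generators $a,b$ satisfy $c_1(\chi_{r,q})=ra+qb$ and that the orientations of the sphere factors match the complex orientations of the $W_j$; that bookkeeping is the same normalization the paper pins down via the identification $H^2(\Zp;\Z)\cong\Ext(H_1(\Zp;\Z),\Z)$, and it should be made explicit if your argument is written out in full.
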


In keeping with the analogy to the lens space, Lemma~\ref{lemma:product-of-rotation} states that the $k$-invariant is the product of rotation \textit{classes} in $H^2((\Zp)^2; \Z)$.

\begin{proof}
  The $k$-invariant $k(L) \in H^{2m}(K(G,1);\Z^2)$ is a homotopy class of maps $K(\pi_1 L, 1) \to K(\pi_{2m-1} L, 2m)$.  The proof makes use of the naturality of the $k$-invariant.  Suppose
  a $\Zp$ subgroup of $(\Zp)^2$ is generated by $(\alpha,\beta)$.
  Then we have a cover
  \[
    \bar{L} = (S^{2m-1} \times S^{2m-1}) / \Zp \to L(p,p;R,Q).
  \]

  By \cite[page~396]{MR2721633}, the $k$-invariant
  $k(\bar{L}) \in H^{2m}(\Zp; \Z^2)$ associated to the quotient of
  $S^{2m-1} \times S^{2m-1}$ by the subgroup
  $\langle (\alpha,\beta) \rangle \cong \Zp$ is
  \[
    k(\bar{L}) = \left( \prod_{i=1}^m (r_i \alpha + q_i \beta) \omega, \prod_{i=1}^m (r'_i \alpha + q'_i \beta) \omega \right),
  \]
  where $\omega$ is the generator in $H^2(\Zp;\Z)$, which is
  identified with the generator of $\Zp$ via
  \(H^2(\Zp; \Z) \cong \Ext(H_1(\Zp;\Z), \Z) \cong \Zp\).

    By universal  coefficients and the fact that the cohomology (except in degree
  zero) of $\Zp$ and $(\Zp)^2$ is torsion, we have
  \(\Ext(H_1((\Zp)^2;\Z), \Z) \cong H^2((\Zp)^2; \Z) \cong (\Zp)^2\)
  and \(\Ext(H_1(\Zp;\Z), \Z) \cong H^2(\Zp; \Z) \cong \Zp\), and the
  map \(H^2((\Zp)^2; \Z) \to H^2(\Zp; \Z)\) is dual to the inclusion
  map $\Zp \hookrightarrow (\Zp)^2$; the inclusion map sends the
  generator of $\Zp$ to $(\alpha,\beta)$, so the dual map sends
  $xa + yb \in H^2((\Zp)^2; \Z)$ to $(\alpha a + \beta b) \omega$.
  
  By naturality of the $k$-invariant, we have that the map
  $H^{2m}((\Zp)^2; \Z^2) \to H^{2m}(\Zp; \Z^2)$ sends $k(L)$ to
  $k(\bar{L})$.  We consider only the left-hand factor of $k(L)$;
  this is some homogeneous polynomial of degree $n$ in the classes
  $a,b \in H^2((\Zp)^2; \Z)$; write this polynomial as $f(a,b)$.

  Then the map $H^{2m}((\Zp)^2; \Z) \to H^{2m}(\Zp; \Z)$ sends
  $f(a,b)$ to
  \[
    f(\alpha,\beta) \omega \in H^{2m}(\Zp; \Z)
  \]
  and
  therefore for $\alpha,\beta \in \Zp$,
  \[
    f(\alpha,\beta) = \prod_{i=1}^m (r_i \alpha + q_i \beta).
  \]
  Now assuming $m < p$, this equality of polynomials as functions
  gives rise to the desired equality
  \[
    f(a,b) = \prod_{i=1}^m (r_i a + q_i b).
  \]
  The right-hand factor of $k(L)$ is computed the same way.
\end{proof}

\section{The $S^3\times S^3$ classification}
\label{section:s3-times-s3}

Suppose $p > 3$.  We now classify $\Zp \times \Zp$ actions on
$S^3 \times S^3$ up to homotopy.  By Theorem~\ref{thm:he2}, this boils down to
the $k$-invariants encoded by the transgression
\begin{align*}
  d_4(\alpha)=q_{\alpha,0}a^2 + q_{\alpha,1} ab + q_{\alpha,2} b^2, \\
  d_4(\gamma)= q_{\gamma,0}a^2 + q_{\gamma,1} ab + q_{\gamma,2} b^2.
\end{align*}
We therefore package $(d_4(\alpha), d_4(\gamma))$ as a pair
$(Q_1,Q_2)$ of binary quadratic forms over $\Zp$.  The homotopy
classification of $(S^3 \times S^3)/(\Zp \times \Zp)$ amounts, algebraically, to
classifying pairs of binary quadratic forms over $\Zp$ up to the
action of automorphisms of $\Z^2$ on the pair $(Q_1,Q_2)$.  For
example, the pair $(Q_1,Q_2)$ determines the same equivariant oriented
homotopy type as $(Q_1 + Q_2, Q_1)$.  Note that $\Aut(\Z^2)$ amounts
to the action of
\[
\SL^{\pm}_2(\Zp) := \{ M \in \GL_2(\Zp) \mid \det M = \pm 1 \}.
\]
on pairs $(Q_1,Q_2)$.  In what follows, regard this as a \textit{left}
action of $\SL^{\pm}_2(\Zp)$, so that
$M = (m_{ij}) \in \SL^{\pm}_2(\Zp)$ acts via
\begin{equation}\label{eqn:left-action}\tag{*}
  M \cdot (Q_1,Q_2) = (m_{11} Q_1 + m_{12} Q_2, m_{21} Q_1 + m_{22} Q_2).
\end{equation}

Now we determine the classification disregarding the identification of
$\Zp \times \Zp$ with $\pi_1$.  On the levels of quadratic forms, we may
replace the pair $\left(Q_1,Q_2\right)$ by $\left(Q'_1,Q'_2\right)$
where $Q_1$ and $Q'_1$ (respectively $Q_2$ and $Q'_2$) are related by
a common change of coordinates, i.e., an automorphism of
$\Zp \times \Zp$ which amounts to $\GL_2(\Zp)$.  In what follows, regard
this as a \textit{right} action of $\GL_2(\Zp)$ on pairs
$\left(Q_1,Q_2\right)$.

\begin{lemma}\label{lemma:classification}
  Let $z$ be a quadratic nonresidue in $\Zp$.  A pair of binary
  quadratic forms $(Q_1,Q_2)$ satisfying the condition in
  Proposition~\ref{proposition:top-bottom-nonzero} is equivalent to
  $(xa^2, yb^2)$ or equivalent to $(a^2 + xb^2,2ab)$ for $x,y \in \Zp$.
\end{lemma}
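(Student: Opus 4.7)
The plan is to classify the pencil $\langle Q_1, Q_2\rangle$ inside the $3$-dimensional space of binary quadratic forms over $\Zp$ by attaching to it the \emph{discriminant form}
\[
D(s,t) := \operatorname{disc}(sQ_1 + tQ_2),
\]
itself a binary quadratic form in $(s,t)$ whose roots in $\mathbb{P}^1(\Zp)$ correspond precisely to the rank-$\le 1$ elements of the pencil. Under the left $\SL_2^\pm(\Zp)$ action on pairs, $D$ transforms equivariantly in its arguments; under the right $\GL_2(\Zp)$ action on $(a,b)$, $D$ scales by $(\det)^2$, a nonzero square. A preliminary observation is that for a $2$-dimensional pencil $D$ cannot vanish identically, since the cone of squares is an irreducible $2$-dimensional quadric surface in the $3$-dimensional ambient space and contains no linear $2$-plane. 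I then split into three cases according to how $D$ factors over $\Zp$.

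In \emph{Case 1} ($D$ has two distinct $\Zp$-rational roots), the pencil contains two linearly independent squares $\lambda_1^2$ and $\lambda_2^2$. Since proportional linear forms have proportional squares, $\lambda_1, \lambda_2$ are themselves linearly independent, so the right $\GL_2$ change of variables $(\lambda_1,\lambda_2) \to (a,b)$ brings the pencil to $\langle a^2, b^2\rangle$, and a suitable left $\SL_2^\pm$ then diagonalizes the pair as $(xa^2, yb^2)$. In \emph{Case 2} ($D$ has a double root), a unique square $\lambda^2$ lies in the pencil up to scalar; expanding $D$ in a basis $(\lambda^2, Q')$, the middle coefficient of $D$ equals (up to a nonzero constant) the value of $Q'$ at the point where $\lambda$ vanishes, so the double-root condition forces $\lambda \mid Q'$. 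Writing $Q' = \lambda\mu$ with $\mu$ not proportional to $\lambda$, the change of variables $(\lambda,\mu) \to (a,b)$ and rescaling $b \mapsto 2b$ yields $(a^2, 2ab) = (a^2 + 0 \cdot b^2, 2ab)$.

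In \emph{Case 3} ($D$ is anisotropic over $\Zp$), no element of the pencil is a square, but every nondegenerate binary form over $\Zp$ represents every element of $\Zp$, so $D$ attains some nonzero square value. The corresponding pencil element has square discriminant, hence factors as $\lambda\mu$ with $\lambda, \mu$ linearly independent. Changing coordinates $(\lambda,\mu) \to (a,b)$ and killing the $ab$-term of a complementary pencil basis element by a left $\SL_2^\pm$ transformation reduces the pair to $(ab,\, A a^2 + C b^2)$, for which $D(s,t) = s^2 - 4AC\,t^2$, so anisotropy of $D$ forces $AC$ to be a nonsquare. Swapping the two entries and combining a left diagonal rescaling of determinant one with a right diagonal rescaling of $(a,b)$ produces the normal form $(a^2 + xb^2, 2ab)$ with $x \equiv 4AC$ modulo squares, hence a nonsquare as required.

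The main obstacle will be the bookkeeping in Case 3, where the residual scaling freedoms of the left $\SL_2^\pm$ and right $\GL_2$ actions must be combined carefully to pin down $x$ within its square class. The first two cases are essentially change-of-coordinates arguments once the squares in the pencil have been identified.
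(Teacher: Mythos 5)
Your argument is correct, but it takes a genuinely different route from the paper's. The paper works form-by-form: it first puts $Q_1$ into one of the five standard classes of a single binary quadratic form over $\Zp$, then reduces $Q_2$ by subtracting multiples of $Q_1$ and completing the square, invoking Proposition~\ref{proposition:top-bottom-nonzero} to rule out a vanishing $b^2$-coefficient when $Q_1$ is degenerate. You instead attach to the pencil the discriminant form $D(s,t)=\operatorname{disc}(sQ_1+tQ_2)$ and trichotomize by its factorization type. The key steps all check out: with the pencil normalized to $\langle \lambda^2, Q'\rangle$ one computes $D = -4C\,st + (B^2-4AC)t^2$ where $C = Q'(0,1)$, so a double root at the square does force $\lambda \mid Q'$; an anisotropic binary form over $\F_p$ does represent every nonzero square, producing a hyperbolic member of the pencil in Case 3; and the residual left diagonal scalings (determinant $\pm 1$) combined with right diagonal substitutions do suffice to normalize the constants in each case (e.g.\ in Case 1 one can realize any $x,y$ with $xy = \pm\det A$, so both nonzero). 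Your route is arguably more illuminating: the isotropy type of $D$ modulo squares is manifestly an invariant of the double coset, which explains in advance why the split family $(xa^2,yb^2)$ can never meet $(a^2+zb^2,2ab)$ for $z$ a nonsquare (anisotropic $D$) --- a separation the paper only establishes later, via Lemma~\ref{lemma:necessary}. The paper's route is more elementary and hands you the explicit substitutions reused in Section~\ref{section:s3-times-s3}. One shared caveat: both arguments tacitly assume $Q_1$ and $Q_2$ are linearly independent --- you need this for $D\not\equiv 0$ and for the pencil to be a genuine $2$-plane, while the paper needs it at the step $b \mapsto 2b/x$ with $x \neq 0$. The hypothesis of Proposition~\ref{proposition:top-bottom-nonzero} does not literally force independence (the pair $(a^2+b^2,\,a^2+b^2)$ satisfies it but is equivalent to neither normal form), so this is a gap in the lemma as stated rather than a defect specific to your proof; it is presumably harmless for the pairs that actually arise from free actions, but it deserves a sentence.
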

\begin{proof}
There are five
\cite[Theorem IV.10]{MR0340283} equivalence classes of binary
quadratic forms modulo $p$, namely the trivial form $Q(a,b) = 0$, two
degenerate forms $a^2$ and $za^2$, and two nondegenerate quadratic
forms $a^2 + b^2$ and $a^2 + zb^2$.

Suppose $Q_1$ is degenerate, so $Q_1(a,b) = a^2$ or $Q_1(a,b) = za^2$.
Through an automorphism of $\Z^2$ subtracting a multiple of $Q_1$, the
form $Q_2$ becomes $x\, ab + y\, b^2$ for some $x, y \in \Zp$.  By
Proposition~\ref{proposition:top-bottom-nonzero}, it cannot be that
$y = 0$.  Since $y \neq 0$, the automorphism of $\Zp \times \Zp$ sending
$a \mapsto a$ and $b \mapsto \frac{-ax}{2y} + b$ preserves $Q_1$ but transforms
$Q_2$ into $y \, b^2 - \frac{x^{2}}{4 \, y} a^2$.  Subtracting off a
multiple of $Q_1$ via an automorphism of $\Z^2$ finally transforms
$Q_2$ into $y \, b^2$.  Therefore $(Q_1,Q_2) \simeq (xa^2, yb^2)$ for some
$x, y \in \Zp$.

On the other hand, suppose $Q_1$ is nondegenerate, meaning
$Q_1(a,b) = a^2 + b^2$ or $Q_1(a,b) = a^2 + zb^2$.  As before, by
subtracting off a multiple of $Q_1$, then the form $Q_2$ becomes
$x\, ab + y\, b^2$ for some $x, y \in \Zp$.  Either $y \neq 0$ or
$y = 0$.  If $y \neq 0$, then as before $Q_2$ is equivalent to
$y \, b^2 - \frac{x^{2}}{4 \, y} a^2$ which via an automorphism of
$\Z^2$ is transformed into a multiple of $b^2$, and this case is then
handled by the above case in which $Q_1$ is degenerate.  If $y = 0$,
then we are in the situation $(a^2+zb^2,xab)$ for nonzero $z$ and $x$.
We apply the automorphism $a \mapsto a$ and $b \mapsto 2b/x$ to reduce
to a situation of the form $(a^2 + wb^2,2ab)$ for some nonzero $w$.
\end{proof}

\begin{proposition}\label{proposition:xy-to-ab2ab}
  A pair of the form $(x \, a^2, y \, b^2)$ for nonzero $x, y \in \Zp$ is
  equivalent to $(a^2 + w \, b^2, 2 \, ab)$ for $w \in \Zp$.
\end{proposition}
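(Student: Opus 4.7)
The plan is to construct explicit $\GL_2(\Zp)$ (right) and $\SL_2^{\pm}(\Zp)$ (left) actions carrying $(xa^2, yb^2)$ to $(a^2 + wb^2, 2ab)$ for a particular $w$. The motivating observation is that each pair determines a pencil of binary quadratic forms, and the singular (discriminant-zero) members of each pencil are easy to identify: for $(xa^2, yb^2)$ they are $a^2$ and $b^2$, while for $(a^2 + wb^2, 2ab)$ they are $(a \pm tb)^2$ when $w = t^2$. A right action that matches the two sets of singular forms should convert the first pair into something in the same pencil as the second.

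First I would apply the right action $a \mapsto a + tb$, $b \mapsto a - tb$, with the parameter $t$ to be pinned down (the substitution is invertible since its matrix has determinant $-2t$, which will be a unit once $t$ is chosen). This sends $(xa^2, yb^2)$ to $(x(a+tb)^2, y(a-tb)^2)$. Then I would apply a left action given by a matrix $M$ with first row $(1/(2x),\, 1/(2y))$ and second row $(y,\, -x)$; all entries lie in $\Zp$ because $p$ is odd and $x,y$ are units. The first row averages the two forms, canceling the $ab$ contributions and yielding $a^2 + t^2 b^2$, while the second row takes the appropriate difference so the $a^2$ and $b^2$ contributions cancel, leaving $4xyt \cdot ab$.

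The main bookkeeping is to line up the constants. To make the second entry equal $2ab$, the parameter must be $t = 1/(2xy)$. Once $t$ is fixed this way, the first entry becomes $a^2 + \tfrac{1}{4x^2y^2}\,b^2$, and a direct calculation gives $\det M = -1/2 - 1/2 = -1$, so $M$ is genuinely in $\SL_2^{\pm}(\Zp)$; the right-action matrix has determinant $-1/(xy)$, hence is invertible. Therefore $(xa^2, yb^2)$ is equivalent to $(a^2 + wb^2, 2ab)$ with $w = 1/(4x^2y^2)$, which incidentally is a nonzero square. The only subtle step is the simultaneous choice of $t$ and $M$ so that the determinant condition $\det M = \pm 1$ and the two output normalizations hold at once; once these are matched, the rest is routine expansion.
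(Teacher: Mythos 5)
Your proof is correct; I checked the expansion. The substitution $a \mapsto a+tb$, $b \mapsto a-tb$ carries $(xa^2,yb^2)$ to $(xa^2+2xt\,ab+xt^2b^2,\; ya^2-2yt\,ab+yt^2b^2)$, the row $(\tfrac{1}{2x},\tfrac{1}{2y})$ then yields $a^2+t^2b^2$, the row $(y,-x)$ yields $4xyt\,ab$, and with $t=\tfrac{1}{2xy}$ the two determinants are $-\tfrac{1}{xy}$ and $-1$ as you claim, so the witnessing matrices lie in $\GL_2(\Zp)$ and $\SL^{\pm}_2(\Zp)$. Your route is genuinely different from the paper's: the paper first reduces $(xa^2,yb^2)$ by scaling and a quadratic-residue case analysis to one of the normal forms $(a^2,b^2)$, $(a^2,zb^2)$, $(za^2,b^2)$, $(za^2,zb^2)$, identifies these in pairs, and only then proves $(a^2,wb^2)\simeq(a^2+4w^2b^2,2ab)$ by an explicit but unmotivated substitution $a\mapsto a/(2w)-b$, $b\mapsto a+2wb$ together with a specific $\SL_2(\Zp)$ matrix. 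You instead handle arbitrary nonzero $x,y$ in one uniform computation, guided by the structural observation that the equivalence must match the singular members of the two pencils, and you obtain the explicit value $w=\tfrac{1}{4x^2y^2}$. This is consistent with the paper's output $4(y/x)^2$: the ratio is $16y^4$, a fourth power, exactly as Lemma~\ref{lemma:necessary} demands. What your streamlined argument does not provide is the inventory of residue-class representatives $(a^2,b^2)$, $(a^2,zb^2)$, etc., which the paper's case analysis sets up and later reuses (e.g.\ in Proposition~\ref{proposition:lens-cross-lens}); but for the statement as written your proof is complete, shorter, and arguably more transparent.
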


\begin{proof}
  By independently scaling $a$ and $b$ and depending as to whether or
  not $x$ and $y$ are quadratic residues, the pair $(x a^2, y b^2)$ is
  equivalent to
  \[
    (a^2,b^2) \mbox{ or }
    (a^2,zb^2) \mbox{ or }
    (za^2,b^2) \mbox{ or }
    (za^2,zb^2)
  \]
  for a quadratic nonresidue $z \in \Zp^\star$.  By exchanging the
  roles of $a$ and $b$ and swapping the components of the tuple, the
  pair $(a^2,zb^2)$ is equivalent to $(za^2,b^2)$.  It is also the
  case that $(za^2,zb^2) \simeq (a^2,b^2)$ because
  \[
    (za^2,zb^2) \cdot \begin{pmatrix} 1 & 0 \\ 0 & 1/z \end{pmatrix}
    = \begin{pmatrix} z & 0 \\ 0 & 1/z \end{pmatrix} \cdot (a^2,b^2).
  \]

  To conclude the proof, we show
  $(a^2,w \, b^2) \simeq (a^2 + 4w^2 \, b^2, 2ab)$ for $w \in \Zp$.
  To see this,
  applying the equivalence
  $a \mapsto a/(2w) - b$ and $b \mapsto a + 2wb$ shows
  \[
    (a^2,w \, b^2) \simeq (\frac{1}{4 w^{2}} \, a^2 - \frac{1}{w} \, ab + b^{2}, w \, a^{2} + 4 w^2 \, a b + 4 w^3 \, b^{2} )
  \]
  and then applying the automorphism of $\Z^2$ corresponding to
  \[
    \begin{pmatrix} \frac{1}{4w^2} & \frac{-1}{2w} \\ w & 2w^2 \end{pmatrix} \in \SL_2(\Zp)
  \]
  implies that
  \[
    (a^2 + 4w^2 \, b^2, 2 \, ab) \simeq  (\frac{1}{4 w^{2}} \, a^2 - \frac{1}{w} \, ab + b^{2}, w \, a^{2} + 4 w^2 \, a b + 4 w^3 \, b^{2} )
  \]
  so $(a^2,w \, b^2) \simeq (a^2 + 4w^2 \, b^2, 2 \, ab)$.
\end{proof}

It remains to check that $\left(a^2 + b^2, 2ab\right)$ is \textit{not}
equivalent to $\left(a^2 + z b^2, 2ab\right)$.
\begin{lemma}\label{lemma:necessary}
  If $\left(a^2 + \delta b^2, 2ab\right)$ is
equivalent to $\left(a^2 + \delta' b^2, 2ab\right)$ for nonzero
$\delta$ and $\delta'$, then $\delta'/\delta \in \Zp^{\star 4}$
\end{lemma}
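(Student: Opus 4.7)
The plan is to extract from any pair $(Q_1, Q_2)$ of binary quadratic forms a scalar invariant in $\Zp^\star/(\Zp^\star)^4$ and then compute it on both pairs in the statement. Write $A_i$ for the symmetric $2 \times 2$ matrix of $Q_i$, so $Q_i(a,b) = (a,b)\, A_i\, (a,b)^T$, and form the \emph{pencil discriminant}
\[
  \Delta(s,t) := \det\bigl(s A_1 + t A_2\bigr) \in \Zp[s,t],
\]
a binary quadratic form in the auxiliary variables $s,t$.

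Next I will check how $\Delta$, and then the discriminant $D(\Delta)$ of $\Delta$ (viewed as a binary quadratic form in $s,t$), transform under the two actions. Under the left action of $M \in \SL^\pm_2(\Zp)$ from (\ref{eqn:left-action}), the plane spanned by $\{A_1,A_2\}$ is preserved and the new pencil is parametrized by $(s,t) \mapsto (s,t)M$; hence $\Delta$ is replaced by $\Delta \circ M$, which multiplies $D(\Delta)$ by $(\det M)^2 = 1$. Under the right action of $N \in \GL_2(\Zp)$ on the variables $a,b$, each $A_i$ becomes $N^T A_i N$, so $\Delta$ is multiplied by the scalar $(\det N)^2$; since $D$ is quadratic in the coefficients of $\Delta$, this multiplies $D(\Delta)$ by $(\det N)^4$. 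Combining these, the class $[D(\Delta)] \in \Zp^\star/(\Zp^\star)^4$ is a genuine invariant of the equivalence class of $(Q_1,Q_2)$.

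Finally I will evaluate this invariant on the two pairs in question. For $(a^2 + \delta b^2, 2ab)$ one has $A_1 = \mathrm{diag}(1,\delta)$ and $A_2 = \bigl(\begin{smallmatrix}0 & 1 \\ 1 & 0\end{smallmatrix}\bigr)$, so $\Delta(s,t) = \delta s^2 - t^2$ and $D(\Delta) = 4\delta$. The identical calculation gives $D(\Delta) = 4\delta'$ for $(a^2 + \delta' b^2, 2ab)$. Equivalence of the two pairs therefore forces $4\delta \equiv 4\delta' \pmod{(\Zp^\star)^4}$, which is exactly $\delta'/\delta \in (\Zp^\star)^4$.

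The only place that needs any care is the equivariance bookkeeping for the left action: one must confirm that reparametrization by $M$ (versus $M^T$ or $M^{-1}$) lands on the correct side, so that the induced map on $\Delta$ really is a linear change of variables with determinant $\pm 1$. Once that transpose/sign issue is settled the rest is routine linear algebra, and no further input from the geometry of $S^n \times S^n$ is required beyond the equivalence relation already set up.
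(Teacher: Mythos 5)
Your proof is correct, and it takes a genuinely different route from the paper's. The paper proves the lemma by brute force: writing out the identity $\left(a^2+\delta b^2,2ab\right)\cdot R=S\cdot\left(a^2+\delta' b^2,2ab\right)$ entrywise, extracting the relations $\delta {r_{22}}^2+{r_{12}}^2=\delta'\delta {r_{21}}^2+\delta'{r_{11}}^2$ and $r_{12}r_{22}=\delta' r_{11}r_{21}$, ruling out one sign case via $\det R\neq 0$, and finally computing $(\det S)^2=\frac{\delta'}{\delta}\left(\delta {r_{21}}^2-{r_{11}}^2\right)^4$ so that $(\det S)^2=1$ forces $\delta'/\delta$ to be a fourth power. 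You instead package the whole computation into a single invariant: the discriminant of the pencil determinant $\Delta(s,t)=\det(sA_1+tA_2)$, which transforms by $(\det M)^2=1$ under the left $\SL^{\pm}_2$ action (since reparametrizing the pencil by $M$ is a unimodular substitution in $(s,t)$) and by $(\det N)^4$ under the right $\GL_2$ action (since each $A_i\mapsto N^TA_iN$ scales $\Delta$ by $(\det N)^2$ and $D$ is quadratic in the coefficients). The evaluation $D(\Delta)=4\delta$ then immediately gives $\delta'/\delta=(\det N)^4$; this is consistent with the paper's intermediate formula $(\det R)^2=\frac{\delta'}{\delta}\left(\delta{r_{21}}^2-{r_{11}}^2\right)^2$. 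Your transpose bookkeeping is right as flagged: the new pencil is $\Delta((s,t)M)$, a substitution of determinant $\pm1$. What your approach buys is conceptual clarity and reusability --- the class of $D(\Delta)$ in $\Zp^\star/(\Zp^\star)^4$ is an invariant of \emph{every} pair, which also illuminates Proposition~\ref{proposition:classification-of-forms} and makes the count of classes transparent; what the paper's computation buys is the explicit matrices $R$ and $S$, whose form feeds directly into the converse construction in Lemma~\ref{lemma:construct-equivalence}.
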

\begin{proof}
  We follow the argument in \cite{MR1684137}.  Suppose
  $\left(a^2 + \delta b^2, 2ab\right)$ is equivalent to
  $\left(a^2 + \delta' b^2, 2ab\right)$ for nonzero $\delta$ and
  $\delta'$.  Then there is an $R \in \GL_2(\Zp)$ and
  $S \in \pm \SL_2(\Zp)$ so that
  \begin{equation}\label{eq:equal-pairs}
    \left(a^2 + \delta b^2, 2ab\right) \cdot R  = S \cdot \left(a^2 + \delta' b^2, 2ab\right).
  \end{equation}
  Equality of the first component in each tuple yields
  \begin{equation}\label{eq:first-component}
    \left( {r_{11}}^2 + \delta {r_{21}}^2 \right) a^{2} + 2 \left(  r_{11} r_{12} + \delta r_{21} r_{22} \right) ab + \left( {r_{12}}^2 + \delta {r_{22}}^2 \right) b^2  =  s_{11} a^{2} + 2 s_{12}\, a b  + \delta' s_{11} \, b^{2} 
  \end{equation}
  Equality of the coefficients of $a^2$ and $b^2$ in \eqref{eq:first-component} yields
  \begin{align*}
    s_{11} &= \delta {r_{21}}^2 + {r_{11}}^2 \mbox{ and } \\
    \delta' s_{11} &= \delta {r_{22}}^2 + {r_{12}}^2,
  \end{align*}
  respectively, and therefore
  \begin{equation}\label{eq:e1}
    \delta {r_{22}}^2 + {r_{12}}^2 = \delta' \delta {r_{21}}^2 + \delta' {r_{11}}^2.
  \end{equation}
  Equality of the second component in \eqref{eq:equal-pairs} yields
  \begin{equation}\label{eq:second-component}
    2  r_{11} r_{21} \, a^{2}
    + \left( 2  r_{12} r_{21} + 2  r_{11} r_{22} \right)  a b
    + 2  r_{12} r_{22} \, b^{2}
    =   s_{21} a^2 + 2  s_{22} \, a b +  \delta' s_{21} b^{2}.
  \end{equation}
  Equality of the coefficients of $a^2$ and $b^2$ in \eqref{eq:second-component} yields
  \begin{align*}
    s_{21} &= 2 \, r_{11} r_{21} \mbox{ and }\\
    \delta' s_{21} &= 2 \, r_{12} r_{22},
  \end{align*}
  respectively.  We conclude
  \begin{equation}\label{eq:e2}
    r_{12} r_{22} = \delta' r_{11} r_{21}.
  \end{equation}
  Squaring both sides of \eqref{eq:e1} and subtracting $4\delta$ times \eqref{eq:e2}  squared yields
  \[
    {\left(\delta {r_{22}}^2 - {r_{12}}^2\right)}^{2} = {\left(\delta' \delta {r_{21}}^2 - \delta' {r_{11}}^2\right)}^{2}
  \]
  and so
  \begin{equation}\label{eq:two-cases}
    \delta {r_{22}}^2 - {r_{12}}^2 = \pm {\left(\delta' \delta {r_{21}}^2 - \delta' {r_{11}}^2\right)}.
  \end{equation}
  The sign in \eqref{eq:two-cases} cannot be positive; if it were, then adding \eqref{eq:two-cases} to \eqref{eq:e1} yields
  \[
    2 \, \delta {r_{22}}^2 = 2 \, \delta' \delta {r_{21}}^2,
  \]
  so ${r_{22}}^2 = \delta' {r_{21}}^2$.  But multiply both sides of
  \eqref{eq:e2} by ${r_{21}}^2$ and we deduce
  \[
    r_{12} r_{22} {r_{21}}^2 = \delta' r_{11} {r_{21}}^2 = {r_{22}}^2 r_{11}.
  \]
  So either $r_{22} = 0$, in which case $r_{21} = 0$ and the second row of $R$ is zero, or 
  $r_{12} r_{21} = r_{22} r_{11}$ and so $\det R = 0$.  In either case we contradict the assumption $R \in \GL_2(\Zp)$ and so the sign in \eqref{eq:two-cases} must be negative meaning
  \begin{equation}\label{eq:negative-sign}
    \delta {r_{22}}^2 - {r_{12}}^2 = - \delta' \delta {r_{21}}^2 + \delta' {r_{11}}^2.
  \end{equation}
  The difference of \eqref{eq:e1} and \eqref{eq:negative-sign} yields
  \[
    2 {r_{12}}^2 = 2 \, \delta \delta' {r_{21}}^2
  \]
  so $\delta \delta'$ is a square in $\Zp$.  And if our only
  requirement is that $R \in \GL_2(\Zp)$, then the necessary condition
  that $\delta' \delta \in \Zp^{\star 2}$ would also suffice, but we
  also required $S \in \SL^{\pm}tf_2(\Zp)$, or equivalently that
  $(\det S)^2 = 1$.  From \eqref{eq:first-component} and
  \eqref{eq:second-component}, we have
  \[
    S = \begin{pmatrix} s_{11} & s_{12} \\ s_{21} & s_{22} \end{pmatrix} =
    \begin{pmatrix} \delta \, {r_{21}}^2 + {r_{11}}^2 & \delta \, r_{21} \, r_{22} + r_{11} \, r_{12} \\
      2 \, r_{11} \, r_{21} & r_{12} \, r_{21} + r_{11} \, r_{22} \end{pmatrix},
  \]
  which means
  \[
    \det S = {\left(\delta {r_{21}}^2 - {r_{11}}^2\right)} {\left(r_{12} r_{21} - r_{11} r_{22}\right)}.
  \]
  Squaring $\det R$ results in
  \begin{align*}
    (\det R)^2 &= (r_{11} \, r_{22} - r_{12} \, r_{21})^2  \\
               &= {r_{11}}^2 \, {r_{22}}^2 - 2 \, r_{11} \, r_{12} \, r_{21} \, r_{22} + {r_{12}}^2 \, {r_{21}}^2 \\
               &= {r_{11}}^{4} \, \frac{\delta'}{\delta} - 2 \, \delta' \, {r_{11}}^2 \, {r_{21}}^2 + \delta \, \delta' \, {r_{21}}^{4} \\
               &= \frac{\delta'}{\delta} \, \left( \delta \, {r_{21}}^2 - {r_{11}}^2 \right)^2.
  \end{align*}
  by invoking Equation~\eqref{eq:e2}
  and applying the identities
  ${r_{12}}^2 = \delta \, \delta' \, {r_{21}}^2$ and $\delta \, {r_{22}}^2 = \delta' \, {r_{11}}^2$, which follow from taking the sum and difference of Equations~\eqref{eq:e1} and~\eqref{eq:negative-sign}.
  Consequently 
  \begin{align*}
    (\det S)^2 &= {\left(\delta {r_{21}}^2 - {r_{11}}^2\right)^2} \left( \det R \right)^2 \\
               &= \frac{\delta'}{\delta} \left(\delta {r_{21}}^2 - {r_{11}}^2\right)^4,
  \end{align*}
  so $\delta'/\delta$ must be a fourth power.
\end{proof}
In particular, $\left(a^2 + b^2, 2ab\right)$ is not equivalent to
$\left(a^2 + z b^2, 2ab\right)$ because $z$ was chosen specifically to
be a quadratic nonresidue.

\begin{lemma}\label{lemma:construct-equivalence}
  For nonzero $\delta, w \in \Zp$, the pair $\left(a^2 + \delta b^2, 2ab\right)$ is equivalent to $\left(a^2 + \delta w^4 \, b^2, 2ab\right)$.
\end{lemma}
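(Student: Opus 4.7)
The plan is to exhibit explicit matrices $R\in\GL_2(\Zp)$ and $S\in\SL^{\pm}_2(\Zp)$ satisfying
\[
(a^2 + \delta b^2,\, 2ab) \cdot R \;=\; S \cdot (a^2 + \delta w^4 b^2,\, 2ab),
\]
thereby realizing the asserted equivalence. Motivated by wanting the substitution to send $a^2+\delta b^2$ to a scalar multiple of $a^2+\delta w^4 b^2$ (with any residual $ab$-term absorbed by the second component), and matching the ratio $\delta w^4$ between the $b^2$- and $a^2$-coefficients of the target, I would try the ansatz
\[
R = \begin{pmatrix} r_{11} & \delta w^2 r_{21} \\ r_{21} & w^2 r_{11} \end{pmatrix}, \qquad r_{11},r_{21}\in\Zp.
\]

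With this ansatz, a direct expansion of the substitutions $a\mapsto r_{11}a+\delta w^2 r_{21}b$ and $b\mapsto r_{21}a+w^2 r_{11}b$ yields
\[
(a^2+\delta b^2)\cdot R = (r_{11}^2+\delta r_{21}^2)(a^2+\delta w^4 b^2) + 4\delta w^2 r_{11}r_{21}\,ab,
\]
\[
(2ab)\cdot R = 2r_{11}r_{21}(a^2+\delta w^4 b^2) + 2w^2(r_{11}^2+\delta r_{21}^2)\,ab,
\]
so that by inspection the desired $S$ is
\[
S = \begin{pmatrix} r_{11}^2+\delta r_{21}^2 & 2\delta w^2 r_{11}r_{21} \\ 2r_{11}r_{21} & w^2(r_{11}^2+\delta r_{21}^2) \end{pmatrix}.
\]
Using the identity $(r_{11}^2+\delta r_{21}^2)^2 - 4\delta r_{11}^2 r_{21}^2 = (r_{11}^2-\delta r_{21}^2)^2$, one computes $\det R = w^2(r_{11}^2-\delta r_{21}^2)$ and $\det S = w^2(r_{11}^2-\delta r_{21}^2)^2$. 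I would then choose $r_{11}, r_{21}$ so that $r_{11}^2-\delta r_{21}^2 = 1/w$, which simultaneously forces $\det R = w \neq 0$ (so $R\in\GL_2(\Zp)$) and $\det S = 1$ (so $S\in\SL_2(\Zp)\subset\SL^{\pm}_2(\Zp)$), completing the construction.

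The only substantive step — and the main (mild) obstacle — is verifying that the norm equation $r_{11}^2 - \delta r_{21}^2 = 1/w$ is solvable in $\Zp$. This follows from the classical fact that any nondegenerate binary quadratic form over $\F_p$ with $p$ odd is universal: a short Legendre-symbol computation, using $\sum_y \legendre{c + \delta y^2}{p} = -\legendre{\delta}{p}$ for nonzero $c$, shows that the number of pairs $(r_{11},r_{21})\in\Zp^2$ representing any given nonzero value is $p - \legendre{\delta}{p} > 0$. Solvability in hand, the pair $(R,S)$ just described is the required equivalence.
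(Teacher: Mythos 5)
Your proposal is correct and takes essentially the same approach as the paper: an explicit pair $(R,S)$ whose entries are tuned so that the verification reduces to solving a nondegenerate conic over $\F_p$, with the same point count $p - \legendre{\delta}{p}$ guaranteeing solvability. The only differences are cosmetic — you run the equivalence in the opposite direction and normalize the norm equation as $r_{11}^2 - \delta r_{21}^2 = 1/w$ rather than the paper's $\delta r_1^2 - r_2^2 = 1/w^3$.
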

\begin{proof}
  Choose $r_1,r_2 \in \Zp$ so that
  \begin{equation}\label{eqn:curve}
    \delta {r_{1}}^2 - r_{2}^2 \equiv 1/w^3 \pmod p.
  \end{equation}
  This is possible; in fact, there are $p - \legendre{\delta}{p}$ solutions to \eqref{eqn:curve}.
  Then set
  \[
    R := \begin{pmatrix}
w^{2} r_{2}  & \delta w^{2} r_{1} \\
r_{1} & r_{2}
\end{pmatrix}
          \mbox{ and }
S := \begin{pmatrix}
\delta w^{4} {r_{1}}^{2}  + w^{4} r_{2}^{2}  & 2 \delta w^{4} r_{1} r_{2}  \\
2  w^{2} r_{1} r_{2}  & \delta w^{2} r_{1}^{2}  + w^{2} r_{2}^{2} 
\end{pmatrix}.
\]
Because of \eqref{eqn:curve}, we have
\begin{align*}
\det R &= - w^2 \left( \delta {r_{1}}^{2} - {r_{2}}^{2} \right) = -1/w \neq 0, \\
\det S &= w^6 {\left(\delta {r_{1}}^{2} - {r_{2}}^{2}\right)}^{2} = 1,
\end{align*}
so $R \in \GL_2(\Zp)$ and $S \in \SL_2(\Zp)$.

We finish the proof by verifying
\begin{equation}\label{eqn:goal}
  \left(a^2 + \delta w^4 b^2, 2ab\right) \cdot R  = S \cdot \left(a^2 + \delta b^2, 2ab\right).
\end{equation}
Comparing the first coordinates each side of~\eqref{eqn:goal} shows
\[
  {\left(r_{2} w^{2} a + \delta r_{1} w^{2} b \right)}^{2} + \delta w^{4} {\left(r_{1} a + r_{2} b\right)}^{2}
  =
  {\left(\delta w^{4} {r_{1}}^{2}  + w^4 {r_{2}}^{2} \right)} \cdot {\left(a^{2} + \delta b^{2}\right)} + 2 \delta w^{4} r_{1} r_{2}  \cdot 2 a b.  
\]
Similarly the second coordinates are equal because
\[
  2 \, {\left(w^2 r_{2} a + \delta w^2 r_{1} b\right)} {\left(r_{1} a + r_{2} b\right)}
  =
  2 w^2  r_{1} r_{2}  {\left(a^2 + b^{2} \delta\right)}  +  {\left(\delta w^2 {r_{1}}^{2} + w^2 {r_{2}}^{2} \right)}\, 2 a b.
\]
\end{proof}
It is easier to see that $\left(a^2 + \delta b^2, 2ab\right)$ is
equivalent to $\left(a^2 + \delta w^8 \, b^2, 2ab\right)$.  Simply
replace $a$ by $aw$ and $b$ by $b/w^3$ to show
$(a^2 + \delta \, w^8 b^2, 2 \, ab) \simeq (w^2 a^2 + \delta w^2 \,
b^2, \frac{2}{w^2} \, ab)$, and then scale the first by $1/w^2$ and
the second by $w^2$ to see this is equivalent to
$(a^2 + \delta \, b^2, 2 \, ab)$.  The challenge of
Lemma~\ref{lemma:construct-equivalence} lies in replacing
$w^8$ with $w^4$.

Combining Lemmas~\ref{lemma:necessary}
and~\ref{lemma:construct-equivalence} yields the following.
\begin{proposition}\label{proposition:classification-of-forms}
  Equivalence classes of pairs of the form $(a^2 + w b^2, 2ab)$ are in
  one-to-one correspondence with elements of
  $\Zp^\times / (\Zp^\times)^4$ where $\Zp^\times$ denotes units modulo $p$.
\end{proposition}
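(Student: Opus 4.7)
The plan is to exhibit the correspondence explicitly and read off bijectivity from the two lemmas immediately preceding the statement. Define
\[
  \Phi : \bigl\{ (a^2 + w \, b^2, \, 2ab) : w \in \Zp^\times \bigr\} \big/ {\simeq} \; \longrightarrow \; \Zp^\times / (\Zp^\times)^4
\]
by sending the equivalence class of $(a^2 + w \, b^2,\, 2ab)$ to the coset of $w$ modulo fourth powers. I would argue $\Phi$ is a well-defined bijection by handling well-definedness, surjectivity, and injectivity as separate one-line checks.

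For well-definedness, suppose $(a^2 + \delta \, b^2, 2ab) \simeq (a^2 + \delta' \, b^2, 2ab)$ with $\delta, \delta' \in \Zp^\times$. Lemma~\ref{lemma:necessary} asserts precisely that $\delta'/\delta \in (\Zp^\times)^4$, so $\delta$ and $\delta'$ determine the same coset in $\Zp^\times/(\Zp^\times)^4$, and $\Phi$ descends to the quotient by $\simeq$.

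Surjectivity is immediate: for any $w \in \Zp^\times$, the pair $(a^2 + w \, b^2, 2ab)$ is in the domain and maps to the coset of $w$. For injectivity, suppose $\Phi$ sends the classes of $(a^2 + w_1 \, b^2, 2ab)$ and $(a^2 + w_2 \, b^2, 2ab)$ to the same coset, so $w_2 = w_1 \, u^4$ for some $u \in \Zp^\times$. Lemma~\ref{lemma:construct-equivalence}, applied with $\delta = w_1$ and $w = u$, produces an explicit equivalence $(a^2 + w_1 \, b^2, 2ab) \simeq (a^2 + w_1 u^4 \, b^2, 2ab) = (a^2 + w_2 \, b^2, 2ab)$, so the two classes coincide.

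There is no real obstacle to overcome here; the content is entirely contained in the two preceding lemmas, and the proposition is just their packaging into a bijection. The only subtlety worth a sentence is restricting attention to $w \in \Zp^\times$, which matches the nondegenerate case identified in Lemma~\ref{lemma:classification} and the hypothesis ``nonzero $\delta$'' used throughout Lemmas~\ref{lemma:necessary} and~\ref{lemma:construct-equivalence}.
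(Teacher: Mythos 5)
Your proposal is correct and is essentially the paper's own argument: the paper proves this proposition simply by stating that Lemmas~\ref{lemma:necessary} and~\ref{lemma:construct-equivalence} combine to give it, and your write-up just makes explicit that the first lemma gives well-definedness (equivalently, injectivity of the induced map to $\Zp^\times/(\Zp^\times)^4$) while the second gives that fourth-power-equivalent coefficients yield equivalent pairs, with surjectivity trivial.
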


Observe that the size of $\Zp^\star / (\Zp^{\times})^4$ depends on
$p \mod 4$. Specifically, for $p \equiv 1 \bmod 4$, there are \textit{four}
equivalence classes.  These are given by $(a^2 + z b^2, 2ab)$ for $z$
representatives of classes $\Zp^\star / \Zp^{\star 4}$.

For $p \equiv 3 \bmod 4$, there are \textit{two} equivalence classes.
For nonzero $x, x', y, y' \in \Zp$, the pair $(x a^2, y b^2)$ is
equivalent to $(x' a^2, y' b^2)$, and every pair is equivalent to
either $(a^2+b^2,2ab)$ or $(a^2 + zb^2, 2ab)$ for a quadratic
nonresidue $z$.  So the only possibilities are
$(a^2 + b^2, 2ab) \simeq (a^2,b^2)$ and $(a^2 + z b^2, 2ab)$.

All of this algebra encodes the homotopy type of the quotients, as
summarized in the following.
\begin{theorem}\label{thm:summary}
Let $p > 3$ be prime. If $p \equiv 1 \bmod 4$, then there are four homotopy classes of quotients of $S^3 \times S^3$ by free $\Zp \times \Zp$ actions.  If $p \equiv 3 \bmod 4$, then there are two classes.
\end{theorem}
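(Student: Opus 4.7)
The plan is to assemble the theorem from the pieces already developed in the section. By Theorem \ref{thm:he2}, homotopy equivalence classes of quotients $(S^3 \times S^3)/(\Zp \times \Zp)$ are in bijection with the orbits of pairs $(Q_1,Q_2)$ of binary quadratic forms over $\Zp$ under the combined left action of $\SL^{\pm}_2(\Zp)$ (arising from automorphisms of $\pi_3 \cong \Z^2$) and the right action of $\GL_2(\Zp)$ (arising from automorphisms of $\pi_1 \cong \Zp \times \Zp$, as noted in Remark \ref{remark:nonequivariant}). I will therefore count these orbits among the pairs satisfying the nondegeneracy restriction from Proposition \ref{proposition:top-bottom-nonzero}.

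First I invoke Lemma \ref{lemma:classification}, which reduces any admissible pair to either the diagonal form $(xa^2, yb^2)$ with $x, y \in \Zp^\times$, or to the form $(a^2 + xb^2, 2ab)$ with $x \in \Zp^\times$. Next, Proposition \ref{proposition:xy-to-ab2ab} shows that every diagonal pair $(xa^2, yb^2)$ is itself equivalent to some $(a^2 + wb^2, 2ab)$. Hence every admissible pair is equivalent to one of the form $(a^2 + wb^2, 2ab)$ with $w \in \Zp^\times$, and Proposition \ref{proposition:classification-of-forms} identifies the set of equivalence classes of such pairs with $\Zp^\times / (\Zp^\times)^4$.

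It remains to compute the size of $\Zp^\times/(\Zp^\times)^4$ in the two congruence cases. Since $\Zp^\times$ is cyclic of order $p-1$, the quotient has order $\gcd(4, p-1)$. If $p \equiv 1 \bmod 4$, then $4 \mid p-1$, so there are four classes, represented by $(a^2 + zb^2, 2ab)$ as $z$ ranges over coset representatives of $\Zp^\times/(\Zp^\times)^4$. If $p \equiv 3 \bmod 4$, then $\gcd(4,p-1) = 2$, so $(\Zp^\times)^4 = (\Zp^\times)^2$ and the quotient has order two; representatives are given by $(a^2 + b^2, 2ab)$ (equivalent via Proposition \ref{proposition:xy-to-ab2ab} to the split pair $(a^2, b^2)$) and $(a^2 + zb^2, 2ab)$ for a quadratic nonresidue $z$.

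There is no real obstacle here: the theorem is a bookkeeping corollary of the work already done in the section. The only subtlety worth emphasizing in the write-up is that the combined left/right group action is precisely the one used throughout Lemmas \ref{lemma:classification} and \ref{lemma:necessary} and Proposition \ref{proposition:classification-of-forms}, so the orbit count translates directly into the count of homotopy types guaranteed by Theorem \ref{thm:he2}.
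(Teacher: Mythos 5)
Your argument establishes only half of the theorem: it shows that the admissible $k$-invariants fall into at most four (resp.\ two) equivalence classes, so there are \emph{at most} that many homotopy types. But the theorem asserts there \emph{are} four (resp.\ two) classes, and Theorem~\ref{thm:he2} by itself gives no surjectivity: the restrictions of Proposition~\ref{proposition:top-bottom-nonzero} are necessary conditions on the transgression of an actual free action, not a guarantee that every pair of forms satisfying them arises from one. (The abstract of the paper makes exactly this point: the restrictions are not in general sufficient to determine which $k$-invariants are realizable.) So the claimed ``bijection'' between homotopy classes and orbits of admissible pairs is precisely the statement that still needs proof, and your write-up assumes it.

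The paper closes this gap with a construction step that your proposal omits entirely. Using Lemma~\ref{lemma:product-of-rotation}, the $k$-invariant of the generalized lens-space quotient $L(p,p;R,Q)$ is the pair of products of rotation classes $\left(\prod_i (r_i a + q_i b), \prod_i (r'_i a + q'_i b)\right)$; one then notes $(a^2+wb^2,2ab)\simeq\bigl((a+b)(a+wb),\,2ab\bigr)$ and takes $R=(1,1,2,0)$, $Q=(1,w,0,1)$, with $w\neq 0$ needed to make the action free. This step is not a formality: products of ordinary lens spaces only produce $k$-invariants of the form $(xa^2,yb^2)$, which by Proposition~\ref{proposition:xy-to-ab2ab} are equivalent to $(a^2+4(y/x)^2b^2,2ab)$ --- the coefficient is always a square --- so when $p\equiv 1\bmod 4$ they realize only two of the four cosets of $(\Zp^\times)^4$. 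The remaining two classes, $(a^2+zb^2,2ab)$ with $z$ a nonsquare, require the genuinely ``twisted'' actions supplied by Lemma~\ref{lemma:product-of-rotation}. You need to add this realization argument (or an equivalent one) before the orbit count you carried out can be converted into a count of homotopy types.
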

\begin{proof}
  We must construct quotients of $S^3 \times S^3$ by free
  $\Zp \times \Zp$ actions which exhibit these possible
  $k$-invariants.  For this, we rely on
  Lemma~\ref{lemma:product-of-rotation}.  We note that
  $(a^2 + w b^2, 2ab)$ is equivalent to
  \[
    \left( a^2 + wb^2 + (1+w)\,ab, 2ab \right) =
    \left( \left(a+b\right) \left(a+w\,b\right), 2 ab \right),
  \]
  so let $R = (1, 1, 2, 0)$ and $Q = (1, w, 0, 1)$ and then
  $L(p,p;R,Q)$ has $k$-invariant equivalent to $(a^2 + w b^2, 2ab)$.
  We must impose the additional condition $w \neq 0$ in order to ensure
  that this is a \textit{free} action.
  With this constructions in hand, the classification of quotients
  then follows from
  Proposition~\ref{proposition:classification-of-forms}.
\end{proof}

\begin{remark}
  There are precedents for considering the simultaneous equivalence of
  forms.  The case of simultaneous equivalence of forms over $\Z$ is
  discussed in \cite{MR1133952} but our situation over $\Zp$ is
  easier.  To make the situation even more concrete, instead of forms,
  consider matrices; equivalence of forms amounts to congruence of
  matrices.  That setup fits into the work of Corbas and Williams
  \cite{MR1684137} which considers the action of
  $\GL_2(\Zp) \times \GL_2(\Zp)$ on pairs $\left(A,B\right)$ of
  matrices, where $\GL_2$ acts on the right by congruence and on the
  left as in \eqref{eqn:left-action}.
\end{remark}

\section{Lens cross lens}
\label{section:lens-times-lens}

Section~\ref{section:s3-times-s3} completed the classification of
$\Zp \times \Zp$ actions on $S^3 \times S^3$, but now we narrow in on a special
case.  Consider the case of $L_3(p;1,x) \times L_3(p;1,y)$, i.e., the
product of two lens spaces with rotation numbers $x$ and $y$
respectively.  Viewed as a quotient of $S^3 \times S^3$ by
$\Zp\times\Zp$, this product of lens spaces has $k$-invariant
$(xa^2,yb^2)$.

We can classify $L_3(p;1,x) \times L_3(p;1,y)$ up to (simple) homotopy
equivalence.  When $p \equiv 3 \bmod 4$, any product of three-dimensional
lens spaces is (simple) homotopy equivalent to any other such product.

\begin{proposition}\label{proposition:lens-cross-lens}
  Suppose $p \equiv 3 \bmod 4$.  Then for nonzero
  $x, x', y, y' \in \Zp$, the pair $(x a^2, y b^2)$ is equivalent to
  $(x' a^2, y' b^2)$.
\end{proposition}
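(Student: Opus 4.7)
The plan is to reduce the claim to a statement about fourth powers in $\Zp^\times$ and then exploit the fact that, when $p \equiv 3 \bmod 4$, every square in $\Zp^\times$ is automatically a fourth power.

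First, I would use the right $\GL_2(\Zp)$-action to rescale the coordinates: substituting $a \mapsto \lambda a$ replaces $xa^2$ by $x\lambda^2 a^2$ and fixes the class of $yb^2$, so independent rescaling of $a$ and $b$ reduces the problem to the four representative pairs $(a^2,b^2)$, $(a^2,zb^2)$, $(za^2,b^2)$, and $(za^2,zb^2)$, where $z$ is a fixed quadratic nonresidue. Next I would eliminate two of these. The swap matrix $\bigl(\begin{smallmatrix}0 & 1 \\ 1 & 0\end{smallmatrix}\bigr) \in \SL_2^\pm(\Zp)$ combined with the right action exchanging $a$ and $b$ shows $(a^2, zb^2) \simeq (za^2, b^2)$, and the identity already displayed in the proof of Proposition~\ref{proposition:xy-to-ab2ab}, namely
\[
  (za^2, zb^2)\cdot\begin{pmatrix}1 & 0\\ 0 & 1/z\end{pmatrix} = \begin{pmatrix}z & 0\\ 0 & 1/z\end{pmatrix}\cdot(a^2,b^2),
\]
gives $(za^2,zb^2)\simeq(a^2,b^2)$ (the left factor lies in $\SL_2(\Zp)$). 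Thus the entire claim reduces to showing $(a^2, b^2)\simeq(a^2, zb^2)$.

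To compare these last two pairs, I would pass to the normal form of Proposition~\ref{proposition:xy-to-ab2ab}. That proposition, applied with $w=1$ and $w=z$ respectively, shows
\[
  (a^2, b^2)\simeq (a^2 + 4\,b^2,\,2ab), \qquad (a^2, zb^2)\simeq (a^2 + 4z^2\, b^2,\,2ab).
\]
By Lemma~\ref{lemma:construct-equivalence}, $(a^2+\delta b^2,2ab)\simeq(a^2+\delta w^4 b^2,2ab)$ for any nonzero $w\in\Zp$, so it suffices to exhibit $w\in\Zp^\times$ with $w^4 = z^2$, i.e.\ to check that $z^2 \in (\Zp^\times)^4$.

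Here is where the hypothesis $p \equiv 3 \bmod 4$ enters, and this is the only step that genuinely requires it. Since $\Zp^\times$ is cyclic of order $p-1$ and $\gcd(p-1,4) = \gcd(p-1,2) = 2$ when $p\equiv 3\bmod 4$, the fourth-power and squaring maps have the same image, so $(\Zp^\times)^4 = (\Zp^\times)^2$. Concretely, $-1$ is a nonresidue, so $-z$ is a square $u^2$, and then $z^2 = u^4$. Substituting $w = u$ into Lemma~\ref{lemma:construct-equivalence} yields $(a^2 + 4b^2, 2ab) \simeq (a^2 + 4z^2 b^2, 2ab)$, completing the chain of equivalences $(xa^2, yb^2) \simeq (a^2,b^2) \simeq (a^2, zb^2)$ and hence the proposition. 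The only conceptual obstacle is the translation between the two normal forms, but this is handled entirely by the already-proven Proposition~\ref{proposition:xy-to-ab2ab} and Lemma~\ref{lemma:construct-equivalence}; the arithmetic input $p\equiv 3\bmod 4$ only appears in the single step $(\Zp^\times)^2 \subseteq (\Zp^\times)^4$.
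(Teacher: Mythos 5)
Your proof is correct, but it takes a genuinely different route from the paper's. The paper's argument is a two-line trick: after reducing to the representatives $(a^2,b^2)$ and $(a^2,zb^2)$ exactly as you do, it applies the determinant $-1$ element $\bigl(\begin{smallmatrix}1&0\\0&-1\end{smallmatrix}\bigr)\in\SL^{\pm}_2(\Zp)$ on the left to get $(a^2,zb^2)\simeq(a^2,-zb^2)$, and then uses the fact that $-z$ is a quadratic residue when $p\equiv 3\bmod 4$ to rescale $b$ and conclude $(a^2,-zb^2)\simeq(a^2,b^2)$. You instead push both pairs into the normal form $(a^2+4w^2b^2,2ab)$ via Proposition~\ref{proposition:xy-to-ab2ab} and then invoke Lemma~\ref{lemma:construct-equivalence} together with the identity $(\Zp^\times)^4=(\Zp^\times)^2$ for $p\equiv 3\bmod 4$. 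Both arguments ultimately rest on the same arithmetic input (that $-1$ is a nonresidue, so squares and fourth powers coincide), and your chain of equivalences checks out: $z^2=(-z)^2=u^4$ with $u^2=-z$, so Lemma~\ref{lemma:construct-equivalence} with $\delta=4$, $w=u$ closes the loop. The trade-off is that your route leans on the heaviest lemma in the section (the explicit matrices and the point count on the conic in Lemma~\ref{lemma:construct-equivalence}), whereas the paper's sign-flip argument is self-contained and elementary; on the other hand, your version makes transparent why the answer is consistent with Proposition~\ref{proposition:classification-of-forms}, since $\Zp^\times/(\Zp^\times)^4$ has only two elements in this case.
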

\begin{proof}
  As in the proof of Proposition~\ref{proposition:xy-to-ab2ab}, we have
  the pair $(x a^2, y b^2)$ is
  equivalent to
  \[
    (a^2,b^2) \simeq (za^2,zb^2) \mbox{ or }
    (a^2,zb^2) \simeq
    (za^2,b^2).
  \]
  for a quadratic nonresidue $z \in \Zp^\star$.  But when
  $p \equiv 3 \bmod 4$, the quantity $-z$ is a square, and so
  \[
    (a^2,zb^2) \simeq (a^2,-zb^2) \simeq (a^2,b^2)
  \]
  meaning \textit{all} pairs of the form $(xa^2,yb^2)$ are equivalent.
\end{proof}

When $p \equiv 1 \bmod 4$, since
\[
  (xa^2,yb^2) \simeq (a^2, (y/x) \, b^2) \simeq (a^2 + 4\,(y/x)^2 \, b^2, 2 \, ab),
\]
the classification boils down to whether or not $2(y/x)$ is a square modulo $p$. 

This is related to previous work of Kwasik--Schultz; they proved
squares of lens spaces are diffeomorphic.
\begin{theorem}[\cite{MR1876893}]
  For $p$ odd and rotation numbers $r$ and $q$, there is a
  diffeomorphism
  \[
    L_3(p;1,r) \times L_3(p;1,r) \cong L_3(p;1,q) \times L_3(p;1,q).
  \]
\end{theorem}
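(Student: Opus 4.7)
The plan is to leverage the $k$-invariant apparatus of Sections~\ref{section:homotopy-equivalence}--\ref{section:s3-times-s3} to first produce a homotopy equivalence, then upgrade it through simple homotopy theory and surgery to a diffeomorphism.

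First I would apply Lemma~\ref{lemma:product-of-rotation} with $R=(1,r,0,0)$ and $Q=(0,0,1,r)$ to record $k(L_3(p;1,r) \times L_3(p;1,r)) = (r\,a^2,\,r\,b^2)$, and similarly $k(L_3(p;1,q)^2) = (q\,a^2,\,q\,b^2)$. The identity $(z a^2, z b^2) \simeq (a^2, b^2)$ used in the proof of Proposition~\ref{proposition:xy-to-ab2ab} then shows that the two pairs are equivalent under the combined $\SL^{\pm}_2(\Zp) \times \GL_2(\Zp)$ action, so Theorem~\ref{thm:he2} supplies a homotopy equivalence $\Phi\colon L_3(p;1,r)^2 \to L_3(p;1,q)^2$.

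Next I would upgrade $\Phi$ to a simple homotopy equivalence. The product formula for Whitehead torsion, together with the vanishing Euler characteristic of three-dimensional lens spaces, reduces $\tau(\Phi)$ to a symmetric combination of Reidemeister torsions of the two factor maps; because the underlying algebraic equivalence of $k$-invariants is realized by a diagonal element of $\GL_2(\Zp)$ (scaling the two generators), the relevant units in $\mathrm{Wh}(\Zp \times \Zp)$ should cancel in pairs, yielding $\tau(\Phi) = 0$.

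Finally, to realize this simple homotopy equivalence by an actual diffeomorphism, I would invoke surgery theory for closed oriented six-manifolds with finite abelian fundamental group: verify that the normal invariant in $[L_3(p;1,q)^2, G/O]$ lifts trivially, and that the surgery obstruction in $L_6(\Z[\Zp \times \Zp])$ vanishes. The main obstacle I anticipate is this $L$-theoretic step, since $L_*(\Z[\Zp \times \Zp])$ carries delicate $2$-primary torsion even though $p$ is odd; an attractive alternative that bypasses surgery entirely is to exhibit an equivariant diffeomorphism of $S^3 \times S^3$ intertwining the two free $\Zp \times \Zp$-actions, realizing them as conjugate under an element of a connected subgroup of $\mathrm{Diff}(S^3 \times S^3)$ generated by unitary rotations and coordinate permutations, after which passage to the quotient produces the required diffeomorphism directly.
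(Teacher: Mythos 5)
First, a point of context: the paper does not prove this statement at all --- it is imported verbatim from Kwasik--Schultz \cite{MR1876893}, and the surrounding Section~\ref{section:lens-times-lens} records only the \emph{homotopy} classification of products of lens spaces via the quadratic-form calculus; the authors explicitly defer the homeomorphism/diffeomorphism classification of linear quotients to a future paper. So the only part of your proposal that overlaps with anything the paper actually does is the first step: computing $k\bigl(L_3(p;1,r)\times L_3(p;1,r)\bigr)=(r\,a^2,\,r\,b^2)$ from Lemma~\ref{lemma:product-of-rotation} and observing $(r\,a^2,r\,b^2)\simeq(a^2,b^2)\simeq(q\,a^2,q\,b^2)$, which is correct and, via Theorem~\ref{thm:he2}, yields a homotopy equivalence --- though only for $p>3$, since Theorem~\ref{thm:he2} requires $2p>n+3$, while the cited theorem covers all odd $p$.

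The remaining steps contain genuine gaps. The Whitehead torsion argument does not go through as stated: the product formula $\tau(f\times g)=\chi(\cdot)\,j_{1*}\tau(f)+\chi(\cdot)\,j_{2*}\tau(g)$ applies to \emph{product maps}, but your $\Phi$ cannot in general be taken of the form $f\times g$, because $L_3(p;1,r)$ and $L_3(p;1,q)$ are usually not even homotopy equivalent (that requires $\pm rq^{\pm1}$ to be a quadratic residue). Controlling $\tau(\Phi)\in\mathrm{Wh}(\Zp\times\Zp)$ for a non-product equivalence is precisely where the real work lies, and ``the units should cancel in pairs'' is an assertion, not an argument. The surgery step (normal invariant in $[L_3(p;1,q)^2,G/O]$ and obstruction in $L_6(\Z[\Zp\times\Zp])$) is likewise only named, not carried out. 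Finally, the proposed shortcut of conjugating the two actions by a unitary/permutation diffeomorphism of $S^3\times S^3$ is doomed: such a conjugation would make the two linear representations of $\Zp\times\Zp$ on $\C^2\times\C^2$ equivalent up to an automorphism of the group, which forces the character multisets $\{(1,0),(r,0),(0,1),(0,r)\}$ and $\{(1,0),(q,0),(0,1),(0,q)\}$ to correspond under $\GL_2(\Zp)$, and that fails unless $q\equiv\pm r^{\pm1}$. The content of the Kwasik--Schultz theorem is exactly that the quotients are diffeomorphic even though the actions are not linearly equivalent, so any proof must confront the non-linear identification head on rather than bypass it.
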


A future paper completes the homeomorphism classification of spaces
resulting from ``linear'' actions such as these products of lens
spaces.

\section{Some comments on groups containing $\Zp\times\Zp$}
\label{section:NormalAbelianSubgroup}

While we know that $\Zp$ and $\Zp\times\Zp$ can act freely on $S^n\times S^n$, the exact conditions for a group to be able to act freely on $S^n \times S^n$ remains open. Conner \cite{MR0096235} and Heller \cite{MR0107866} showed that for a group to act freely on $S^n\times S^n$, the group must have rank at most two, but Oliver \cite{MR561237} showed that $A_4$ cannot act on $S^n\times S^n$, and so every rank 2 simple group is also ruled out \cite{MR1745517}. Explicit examples of free actions by subgroups of a non-abelian extension of $S^1$ by $\Zp\times\Zp$ have been constructed \cite{MR2592957}, but Okay--Yal\c{c}in \cite{okay2018} have shown that $\Qd(p)=(\Zp\times\Zp)\rtimes SL_2(\mathbb{F}_p)$ cannot act freely on $S^n\times S^n$. In this section we show how the restrictions on the $k$-invariant as described in Section \ref{section:k-invariants} can be useful in determining whether or not a group $G$ containing $\Zp\times\Zp$ as a normal abelian subgroup can act freely on $X=S^n\times S^n$. We continue to take $p > 3$ to be an odd prime and $n\ge 3$ to be odd. We align some of our notation with that in \cite{okay2018} to better show the parallel calculations.

Similar to the approach in Section~\ref{section:k-invariants}, we can consider the Borel fibration:
\[X \overset{i}\to X_{hG}\to BG,\]
and the associated Serre spectral sequence
\[E^{p,q}_2 = H^p(BG ; H^q(X;\Z)) \Rightarrow H^{p+q}(X_{hG} ; \Z).\]
with the first nontrivial differential $d_{n+1}$.
If $\alpha$ and $\gamma$ are the generators in degree $n$ of $H^*(X;\Z)$, with $\alpha^2=\gamma^2=0$, then $d_{n+1}(\alpha)=\bar\tau(\alpha,0)$, $d_{n+1}(\gamma)=\bar\tau(0,\gamma)$, and $k^{n+1}=d_{n+1}(\alpha)\oplus d_{n+1}(\gamma)$.

Set $K$ to be the normal Abelian subgroup of $\Zp\times \Zp$ in $G$, and consider the restriction of the spectral sequence associated to the Borel fibration to the $K$ action. Then Proposition \ref{proposition:top-bottom-nonzero} and Corollary \ref{corollary:restrictions_on_transgression} can be sometimes be used to determine if $G$ can act freely on $X$.
The transgression for the first nontrivial differential of the restriction of the spectral sequence associated to Borel fibration to $K$ is
\[(d_{n+1})_K : H^0(BK;H^n(X;\Z)) \to H^{n+1}( BK ; H^0(X;\Z) ).
\]
Let $\Res^G_K:H^*(G)\to H^*(K)$ be induced by the inclusion of $K$ into $G$. Since the Borel construction is natural, it follows that the $k$-invariant in the restricted case is $k^{n+1}_K=\Res^G_K(d_{n+1}(\alpha))\oplus \Res^G_K(d_{n+1}(\gamma))$.

Suppose $G$ acts freely on $X$, so $H^*(X_{hG};\Z)\cong H^*(X/G;\Z)$ is finite-dimensional in each degree and vanishes above $2n$. It follows that the restriction to $K$ gives $H^*(X_{hK};\Z)\cong H^*(X/K;\Z)$ is also finite dimensional in each degree and vanishes above $2n$ as $K$ acts freely. If both $(d_{n+1})_K(\alpha)$ and $(d_{n+1})_K(\gamma)$ are zero in $H^{n+1}(K;\Z)/\lambda^{(n+1)/2}$, for some nonzero $\lambda\in H^2(K;\Z)$, then $X/K$ will fail to be finite dimensional by Corollary \ref{corollary:restrictions_on_transgression}, and we get a contradiction. Hence $G$ cannot act freely. 

As an example, consider $G=\Qd(p)=(\Zp)^2\rtimes SL_2(\Zp)$. We show that one can use the restrictions on the $k$-invariants and some of the arguments in \cite{okay2018} to see that $\Qd(p)$ cannot act freely on $S^n\times S^n$ for $p$ an odd prime and $n$ odd. This result is consistent with Theorem~5.1 in \cite{okay2018}. 

Since cohomology is taken with $\Zp$ coefficients in \cite{okay2018}, we first set up a relationship between generators with from the different coefficient groups. Suppose the first nontrivial differential takes $\alpha$ and $\gamma$, also the generators of $H^n(S^n\times S^n;\Zp)$ by slight abuse of notation, to $\mu_1$ and $\mu_2$ in $H^{n+1}(G;\Zp)$. Taking $K$ to be the normal elementary Abelian subgroup $\Zp\times \Zp$ in $G=\Qd(p)$, and restricting the action to $K$, we have that $\theta_1,\theta_2 \in H^{n+1}(K;\Zp)$ are such that $\theta_1=\Res_K^G(\mu_1)$ and $\theta_2=\Res_K^G(\mu_2)$.

Recall the commuting triangle from Section \ref{section:cohomology}:

\[\xymatrix{H^n(K;\Z)\ar[r]^{\rho} & H^n(K;\Zp)\ar[r]^{\tilde{\beta}}\ar[rd]_{\beta} & H^{n+1}(K;\Z)\ar[r]^p\ar[d]^{\rho} & H^{n+1}(K;\Z)\\
 & & H^{n+1}(K;\Zp) & }\]

Since $p$ is the $0$ map, the vertical $\rho$ is injective and $\tilde{\beta}$ is surjective. We can write $H^*(K;\Z/p)=\mathbb{F}_p[x,y]\otimes\wedge(u,v)$, where $|x|=|y|=2$, $|u|=|v|=1$, and $\beta(u)=x$, $\beta(v)=y$, and $H^*(K;\Z)=\mathbb{F}_p[a,b]\otimes \wedge(c)$, with $|a|=|b|=2$, $|c|=3$. It is not hard to see that $\tilde{\beta}(x)=a$, $\tilde{\beta}(y)=b$, and $\tilde{\beta}(uv)=c$.

Now the Bockstein generally satisfies $\beta(\delta\varepsilon)=\beta(\delta)\varepsilon+(-1)^{|\delta|}\delta\beta(\varepsilon)=\delta\beta(\varepsilon)$, for $\delta$ being $x^iy^j$ and $\varepsilon$ being $u$, $v$, or $uv$. We see that
\[
  \beta(H^n(K;\Zp))\subseteq \langle x^{(n+1)/2},x^{(n-1)/2}y,\dots,y^{(n+1)/2}\rangle\subseteq \mathbb{F}_p[x,y],
\]
since $n+1$ is even. Similarly, $\tilde{\beta}$ satisfies $\tilde{\beta}(\delta\varepsilon)=\delta\tilde{\beta}(\varepsilon)$, for $\delta$ being $x^iy^j$ and $\varepsilon$ being $u$, $v$, or $uv$. Again we see that $\tilde{\beta}(H^n(K;\Zp))\subseteq \langle a^{(n+1)/2},a^{(n-1)/2}b,\dots,b^{(n+1)/2}\rangle\subseteq \mathbb{F}_p[a,b]$. As $\tilde{\beta}$ is surjective, $\rho$ is injective, and $\beta=\rho(\tilde{\beta})$, it follows that the $k$-invariant $\theta_1\oplus\theta_2$ comes from elements in $H^{n+1}(K;\Z)$ for some $K$ action on $S^n\times S^n$: $k^{n+1}=\rho^{-1}(\theta_1)\oplus\rho^{-1}(\theta_2)$.

In \cite{okay2018}, it is shown that the ideal generated by $\theta_1$ and $\theta_2$ is in fact generated by $\zeta^{(n+1)/2(p+1)}$, where $\zeta=xy^p-yx^p$ (which is in part based on calculations in \cite{MR2712167}). Since no power of $\zeta$ will contain $x^{(n+1)/2}$ or $y^{(n+1)/2}$, we see that $d_{n+1}(\alpha)$ and $d_{n+1}(\gamma)$, where $\alpha$ and $\gamma$ generate $H^n(S^n\times S^n;\Z)$, have both $q_{\alpha, 0}$ and $q_{\gamma, 0}$ are zero (where $q_{\alpha, 0}$ and $q_{\gamma, 0}$ are the coefficients in Proposition~\ref{proposition:top-bottom-nonzero}), we derive a contradiction.

It is worth noting that in \cite{okay2018}, the calculations show that the free actions of $\Qd(p)$ must have $p$ smaller than $n$, and $n+1$ divisible by $2(p+1)$. The argument also finds a contradiction to finiteness, but relies on \cite{Carlsson}. We also note that while we take $p$ to be large in our homotopy type calculations, the only restrictions that were required in Section~\ref{section:k-invariants} (and hence in this section) were that $p>3$ be an odd prime and $n\geq 3$ be odd. Further, there may be a way to show a contradiction to finiteness using Proposition~\ref{proposition:top-bottom-nonzero} more directly (without needing to make arguments with $\Zp$ coefficients.)

A similar argument could hold for any group containing $(\Zp)^2$ that has a restriction that forces the transgression to behave in such a way.

\bibliographystyle{plain}
\bibliography{references.bib}

\end{document}